\documentclass[11pt, reqno]{amsart}
\pdfoutput=1

\usepackage{amssymb}
\usepackage{mathtools}
\usepackage{mathrsfs}
\usepackage{stmaryrd}
\usepackage{upgreek}
\usepackage{centernot}
\usepackage{cancel}

\usepackage{bm}

\usepackage[T2A,T1]{fontenc}
\usepackage[utf8]{inputenc}
\usepackage[russian, french, english]{babel}
\usepackage{lmodern}
\usepackage[spacing=true,kerning=true,babel=true,nopatch=footnote]{microtype}
\usepackage[shortcuts]{extdash}
\usepackage[inline]{enumitem}
\usepackage{indentfirst}
\usepackage{xspace}
\usepackage{xcolor}
\usepackage{csquotes}

\usepackage[a4paper, left=2cm, right=2cm, top=1in, bottom=1in]{geometry}
\usepackage{graphicx}
\usepackage{wrapfig}
\usepackage{float}
\usepackage{array}
\usepackage{multicol}
\usepackage{mdframed}
\usepackage[skins]{tcolorbox}
\usepackage{framed}
\usepackage[justification=centering, labelfont=bf, font=small]{caption}
\usepackage{algorithm}

\usepackage{tikz}
\usetikzlibrary{shapes}
\usetikzlibrary{arrows.meta}
\usetikzlibrary{decorations.pathmorphing}
\usetikzlibrary{patterns}

\usepackage{titlesec}
\usepackage{titletoc}

\usepackage[
    sortcites,
    backend=biber,
    style=alphabetic,
    sorting=nyt,
    maxnames=100,
    backref=false
]{biblatex}
\usepackage[hidelinks]{hyperref}
\usepackage{cleveref}

\newcommand{\bemph}[1]{{\normalfont#1}}
\newcommand{\ep}[1]{\bemph{(}#1\bemph{)}}

\newtheoremstyle{bfnote}{}{}{\slshape}{}{\bfseries}{\bfseries.}{ }{\thmname{#1}\thmnumber{ #2}\thmnote{ \ep{\normalfont{}#3}}}
\theoremstyle{bfnote}
\newtheorem{theo}{Theorem}[section]
\newtheorem*{theo*}{Theorem}
\newtheorem{prop}[theo]{Proposition}
\newtheorem{lemma}[theo]{Lemma}
\newtheorem{claim}[theo]{Claim}

\newtheorem{corl}[theo]{Corollary}

\newtheorem*{corl*}{Corollary}

\theoremstyle{definition}

\newtheorem*{defn*}{Definition}

\newtheorem*{exmp*}{Example}

\theoremstyle{remark}
\newtheorem*{ques*}{Question}
\newtheorem*{remk*}{Remark}

\crefname{theo}{theorem}{theorems}
\Crefname{theo}{Theorem}{Theorems}
\crefname{lemma}{lemma}{lemmas}
\Crefname{lemma}{Lemma}{Lemmas}
\crefname{claim}{claim}{claims}
\Crefname{claim}{Claim}{Claims}
\crefname{prop}{proposition}{propositions}
\Crefname{prop}{Proposition}{Propositions}
\crefname{corl}{corollary}{corollaries}
\Crefname{corl}{Corollary}{Corollaries}
\crefname{defn}{definition}{definitions}
\Crefname{defn}{Definition}{Definitions}
\crefname{remk}{remark}{remarks}
\Crefname{remk}{Remark}{Remarks}
\crefname{alg}{algorithm}{algorithms}
\Crefname{alg}{Algorithm}{Algorithms}

\newcommand*{\myproofname}{Proof}
\newenvironment{claimproof}[1][\myproofname]{\begin{proof}[#1]}{\end{proof}}

\makeatletter
\newcommand{\neutralize}[1]{\expandafter\let\csname c@#1\endcsname\count@}
\newcommand{\authorfootnote}[1]{%
  \begingroup
  \renewcommand{\@makefnmark}{}%
  \renewcommand{\@makefntext}[1]{\noindent ##1}%
  \footnotetext{#1}%
  \endgroup
}
\makeatother

\newcommand{\0}{\varnothing}
\newcommand{\B}{\mathcal{B}}
\newcommand{\set}[1]{\{#1\}}

\newcommand{\F}{\mathcal{F}}

\newcommand{\D}{\mathcal{D}}

\renewcommand{\P}{\mathbb{P}}
\newcommand{\E}{\mathbb{E}}
\newcommand{\tE}{\widetilde{\mathbb E}}

\renewcommand{\epsilon}{\varepsilon}
\newcommand{\eps}{\varepsilon}
\renewcommand{\phi}{\varphi}
\renewcommand{\leq}{\leqslant}
\renewcommand{\geq}{\geqslant}
\newcommand{\defeq}{\coloneqq}

\newcommand{\red}[1]{{\color{red}#1}}

\newcommand{\todo}[1]{\red{\textbf{\upshape{[#1]}}}}
\newcommand{\emphd}[1]{{\fontseries{b}\selectfont\textsf{#1}}}

\newcommand{\indicator}{\mathbf{1}}

\numberwithin{equation}{section}

\titleformat{\section}[block]{\large\bfseries\sffamily}{\thesection.}{1ex}{}
\titleformat{\subsection}[block]{\bfseries\sffamily}{\thesubsection.}{1ex}{}
\titleformat{\subsubsection}[block]{\itshape}{\bfseries\upshape\sffamily\thesubsubsection.}{1ex}{}

\titlespacing*{\section}{0pt}{*3}{*1}
\titlespacing*{\subsection}{0pt}{*3}{*1}
\titlespacing*{\subsubsection}{0pt}{*2}{*1}

\titlecontents{section}[1.5em]{\smallskip}{\bfseries\thecontentslabel\hspace{1.02em}}{\bfseries}{\,\,\titlerule*[0.77pc]{}\bfseries\contentspage}
\titlecontents{subsection}[4em]{\smallskip}{\thecontentslabel\hspace{1.02em}}{\hspace*{2.32em}}{\,\,\titlerule*[0.77pc]{.}\contentspage}

\renewbibmacro{in:}{}
\renewbibmacro*{volume+number+eid}{\printfield{volume}\setunit*{\addnbspace}\printfield{number}\setunit{\addcomma\space}\printfield{eid}}
\DeclareFieldFormat[article]{volume}{\textbf{#1}\space}
\DeclareFieldFormat[article]{number}{\mkbibparens{#1}}
\DeclareFieldFormat{journaltitle}{#1,}
\DeclareFieldFormat[thesis]{title}{\mkbibemph{#1}\addperiod}
\DeclareFieldFormat[article, unpublished, thesis, incollection, inproceedings]{title}{\mkbibemph{#1},}
\DeclareFieldFormat[book]{title}{\mkbibemph{#1}\addperiod}
\DeclareFieldFormat[unpublished]{howpublished}{#1, }
\DeclareFieldFormat{pages}{#1}
\DeclareFieldFormat[article]{series}{Ser.~#1\addcomma}
\DeclareBibliographyDriver{misc}{%
  \usebibmacro{bibindex}%
  \usebibmacro{begentry}%
  \usebibmacro{author/editor+others/translator+others}%
  \setunit{\labelnamepunct}\newblock
  \usebibmacro{title}%
  \newunit
  \printlist{language}%
  \newunit\newblock
  \usebibmacro{byauthor}%
  \newunit\newblock
  \usebibmacro{byeditor+others}%
  \newunit\newblock
  \printfield{howpublished}%
  \newunit\newblock
  \printfield{type}%
  \newunit
  \printfield{version}%
  \newunit\newblock
  \usebibmacro{organization+location+date}%
  \newunit\newblock
  \usebibmacro{doi+eprint+url}%
  \newunit\newblock
  \usebibmacro{addendum+pubstate}%
  \setunit{\bibpagerefpunct}\newblock
  \usebibmacro{pageref}%
  \newunit\newblock
  \iftoggle{bbx:related}
    {\usebibmacro{related:init}%
     \usebibmacro{related}}
    {}%
  \newunit\newblock
  \printfield{note}%
  \usebibmacro{finentry}%
}

\setlist{topsep=3pt,itemsep=3pt}



\makeatletter
\renewcommand\fs@ruled{%
  \def\@fs@cfont{\bfseries}%
  \let\@fs@capt\floatc@ruled
  \def\@fs@pre{%
    \hrule height .8pt depth 0pt
    \kern5pt
  }%
  \def\@fs@mid{%
    \kern5pt
    \hrule
    \kern5pt
  }%
  \def\@fs@post{%
    \kern5pt
    \hrule
    \relax
  }%
  \let\@fs@iftopcapt\iftrue
}
\makeatother

\title{\sffamily Random Independent Sets in triangle-free and \\linear berge-$C_4$-free Hypergraphs}
\date{}

\author[J. Yu]{Jing~Yu}
\author[J. Zhang]{Junchi~Zhang}

\begin{document}
\begin{abstract}
We study probability distributions on independent sets of uniform hypergraphs with local constraints.
For a positive vertex-weight function $w$ on an $(r+1)$-uniform hypergraph, define the weighted degree of vertex $v$ by
\[
d_w(v):=
\sum_{e\ni v}
\left(
\prod_{u\in e\setminus\{v\}}\dfrac{w(u)}{w(v)}
\right)^{1/r}.
\]
For each fixed $r\geq1$, our first result gives, in every triangle-free
$(r+1)$-uniform hypergraph (without a linearity assumption), a random independent set $I$ satisfying
\[
\mathbb P(v\in I)\geq
\left(
\frac{r}{r+1}(r(r+1))^{-1/r}+o_r(1)
\right)
\left(\frac{\log d_w(v)}{d_w(v)}\right)^{1/r}, \qquad d_w(v)\to+\infty.
\]
The same result also implies that every $d$-degenerate triangle-free $(r+1)$-uniform hypergraph satisfies
\[
\chi_f(G)\leq
\left(
\dfrac{r+1}{r}(r(r+1))^{1/r}+o_r(1)
\right)
\left(\dfrac{d}{\log d}\right)^{1/r}, \qquad d\to+\infty.
\]
Our second result concerns linear Berge-$C_4$-free hypergraphs which allow Berge triangles. 
In this setting, for every sufficiently large threshold $D$, there exists a random independent set $I$ such that, uniformly over all vertices with $d_w(v)\geq D$,
\[
\mathbb P(v\in I)\geq
(1-o_r(1))
\left(\frac{\log d_w(v)}{r\,d_w(v)}\right)^{1/r}, \qquad D\to+\infty.
\]
It also yields
\[
\chi_f(G)\leq
(1+o_r(1))
\left(\frac{rd}{\log d}\right)^{1/r}
\]
for $d$-degenerate linear Berge-$C_4$-free $(r+1)$-uniform hypergraphs. 

Both results are based on the ordered random pick process originated from Martinsson and Steiner. By choosing different selection functions and iteration, we prove the output of the random process gives the required random independent set.
\end{abstract}

\maketitle
\authorfootnote{(JY) Shanghai Center for Mathematical Sciences, Fudan University, Shanghai, China; 
e-mail: \texttt{jyu@fudan.edu.cn}. Partially supported by the National Natural Science Foundation of China grants 12371343 and 12525110 (PI: Hehui Wu).\\[2pt]
(JZ) Shanghai Center for Mathematical Sciences, Fudan University, Shanghai, China; 
e-mail: \texttt{jczhang24@m.fudan.edu.cn}.}

\section{Introduction}
How large an independent set must a sparse graph contain? A stronger question is, can we find a random independent set in which every vertex has a prescribed lower bound on its probability of appearing?
The first question is classical in extremal combinatorics.
For triangle-free graphs it is closely connected with Ramsey theory: $R(3,k) \le n$ means that every triangle-free graph on at least $n$ vertices contains an independent set of size $k$.
Ajtai, Koml\'os, and Szemer\'edi~\cite{Ajtai1980ramsey} proved that an $n$-vertex triangle-free graph of average degree $d$ has an independent set of size at least $c n\log d/ d$ for an absolute constant $c>0$. 
Writing $\alpha(G)$ for the size of a largest independent set, 
Shearer~\cite{Shearer1983triangle} strengthened this to
\[
 \alpha(G)\geq(1-o(1))\frac{n\log d}{d}.
\]
This lower bound on the independence number is referred to as \emph{Shearer's bound}.  His subsequent degree-sequence bound~\cite{Shearer1983triangleII} replaces the average-degree term by a sum of contributions from the vertex degrees.

Coloring provides a stronger route to large independent sets.
Write $\chi(G)$ and $\chi_\ell(G)$ for the \emph{chromatic} and \emph{list chromatic numbers}.
Johansson~\cite{Johansson} established $\chi_\ell(G)=O(\Delta/\log\Delta)$ for triangle-free graphs of maximum degree $\Delta$. Molloy~\cite{Molloy19list} subsequently improved the leading coefficient to $1+o(1)$.

The \emph{fractional chromatic number} provides a convenient setting for vertex-dependent inclusion guarantees.
For a graph or hypergraph $G=(V,E)$, the \emphd{fractional chromatic number} of $G$, denoted by $\chi_f(G)$, is the minimum real number $t\geq 1$ for which there is a probability distribution $\mathcal D$ on independent sets of $G$ such that
\[
 \P_{I \sim \mathcal D}(v\in I)\geq\frac1t\qquad\text{for every }v\in V.
\]
Taking the expected size of such an independent set and choosing a color class uniformly from a proper coloring gives
\[
 \alpha(G)\geq\frac{|V|}{\chi_f(G)}\geq\frac{|V|}{\chi(G)}.
\]

More generally, one may prescribe a \emphd{demand} $h(v)\in[0,1]$ for each vertex, meaning a required lower bound on its inclusion probability, and seek a distribution on independent sets satisfying $\P(v\in I)\geq h(v)$ for every $v \in V$. 
Unlike the common requirement $1/t$ above, these demands may vary from vertex to vertex. This is the distributional formulation of \emphd{fractional coloring with local demands} studied by  Kelly and Postle~\cite{Kelly2024frac}.
They conjectured that Shearer's graph bound admits a local fractional strengthening, with the inclusion probability at $v$ asymptotic to $\log d_G(v)/d_G(v)$.
Martinsson and Steiner~\cite{Martinsson2025RandomInd} proved this conjecture in a more general weighted version.

Another strengthening replaces maximum degree by degeneracy.
A graph is \emphd{$d$-degenerate} if its vertices can be ordered so that each has at most $d$ earlier neighbors.
Greedy coloring gives $\chi(G)\leq d+1$, and triangle-freeness alone does not permit a logarithmic improvement of this ordinary coloring bound; see~\cite{shengtong26degenerate} for the classical constructions and the role of additional size restrictions. However, for fractional coloring, Martinsson and Steiner~\cite{Martinsson2025RandomInd} proved that every $d$-degenerate triangle-free graph $G$ satisfies
\[
 \chi_f(G)\leq(4+o(1))\frac d{\log d}.
\]
Recently, Allen, Dhawan and Noel~\cite{allen2026sharpboundsfractionalchromatic} obtained the sharp leading coefficient 1 for $C_4$-free $d$-degenerate graphs. 

Natural questions arise in the hypergraph setting.
Throughout, a \emphd{hypergraph} is a finite pair $G=(V,E)$ with
\[
 E\subseteq\{e\subseteq V:|e|\geq2\},
\]
and a set is \emphd{independent} if it contains no edge.
The hypergraph $G$ is called \emphd{$k$-uniform} if every edge has size $k$, and \emphd{linear} if any two distinct edges intersect in at most one vertex.
The \emphd{rank} of a hypergraph is the maximum size of its edges.
For $\ell\geq2$, a \emphd{Berge cycle} of length $\ell$ consists of distinct edges $e_1,\ldots,e_\ell$ and distinct vertices $x_1,\ldots,x_\ell$ such that $\{x_i,x_{i+1}\}\subseteq e_i$, with indices taken modulo $\ell$. 
The vertices $x_1,\ldots,x_\ell$ are called the \emphd{core vertices} of the cycle.
A \emphd{Berge triangle} is a Berge cycle of length three.
Thus linearity is equivalent to the absence of Berge cycles of length two.
A hypergraph is \emphd{uncrowded} if it has no Berge cycles of lengths two, three, or four.  A hypergraph is \emphd{$d$-degenerate} if it admits a vertex order in which each vertex is the largest vertex of at most $d$ edges.
Equivalently, every nonempty vertex-induced subhypergraph has a vertex contained in at most $d$ of its edges.

For fixed $r\geq 2$, the counterpart of the logarithmic graph bound is an independent set of size
\[
 \Omega_r\!\left(n\left(\frac{\log\Delta}{\Delta}\right)^{1/r}\right)
\]
in an $(r+1)$-uniform hypergraph of maximum degree $\Delta$, under suitable restrictions on short cycles.
Ajtai, Koml\'os, Pintz, Spencer, and Szemer\'edi~\cite{Ajtai1982uncrowded} established this bound for uncrowded hypergraphs.
Duke, Lefmann, and R\"odl~\cite{Lefmann95uncrowded} extended it to linear hypergraphs.
Frieze and Mubayi~\cite{Mubayi13color} proved the corresponding chromatic bound
\[
 \chi(G)\leq O_r\!\left(\left(\frac{\Delta}{\log\Delta}\right)^{1/r}\right)
\]
for linear hypergraphs.
Degree-sequence independence estimates in the linear Berge-triangle-free setting were obtained by Borowiecki, Gentner, L\"owenstein, and Rautenbach~\cite{Piotr16independence}.

We also use the following notion of triangle-freeness, as in~\cite{Mubayi15list,Postle25chrom}: a \emphd{triangle} consists of three distinct edges $e,f,g$ and three distinct vertices $u,v,z$ such that 
\[
\set{u,v}\subseteq e,
    \qquad
    \set{v,z}\subseteq f,
    \qquad
    \set{z,u}\subseteq g,
   \qquad
  \set{u,v,z}\cap e\cap f\cap g=\0.
\]
A hypergraph is \emphd{triangle-free} if it contains no such triangle.
A triangle is a Berge $C_3$, and in linear hypergraphs these two notions agree.
Cooper and Mubayi~\cite{Mubayi15list} proved a logarithmic list-coloring bound for triangle-free hypergraphs of rank $3$, and Li and Postle~\cite{Postle25chrom} extended it to arbitrary rank. 
In particular, for triangle-free $(r+1)$-uniform hypergraphs their result gives
\[
 \chi_\ell(G)\leq O_r\!\left(\left(\frac{\Delta}{\log\Delta}\right)^{1/r}\right).
\]

Leading constants have also received attention. For uncrowded $(r+1)$-uniform hypergraphs, Iliopoulos~\cite{Ili21} obtained the list-coloring upper bound $(1+o(1))r(\Delta/\log\Delta)^{1/r}$. The smaller bound
\[
 \chi_\ell(G)\leq(1+o_r(1))\left(\frac{r\Delta}{\log\Delta}\right)^{1/r}
\]
is proved in Yu and Zhang~\cite{yu2026listcoloringnumberuncrowded} and in the second version of manuscript by Dhawan, Methuku, and Vo~\cite{dhawan2026listcoloringuncrowdedhypergraphs} on Sept. 7th.
Our earlier work~\cite{yu2026hypergraphindependenceboundsmaximum} transfers the corresponding independence-number bound from maximum degree to average degree.
For $d$-degenerate hypergraphs, Dhawan~\cite{dhawan2026fractionalcoloringentropy} used entropy to prove that $(r+1)$-uniform $d$-degenerate hypergraphs with no Berge cycles of lengths two or three satisfy
\[
 \chi_f(G)\leq
 \left(\frac{r+1}{r}\bigl((r+1)r^2\bigr)^{1/r}+o_r(1)\right)
 \left(\frac d{\log d}\right)^{1/r},
\]
and also obtained local-demand versions. 
He asked  \cite[Conjecture~5.4]{dhawan2026fractionalcoloringentropy} whether the same order of magnitude holds for triangle-free uniform hypergraphs without a linearity assumption.

Our first result resolves~\cite[Conjecture~5.4]{dhawan2026fractionalcoloringentropy}.
For $r\geq2$, its degeneracy consequence also improves the leading coefficient in~\cite[Theorem~1.4]{dhawan2026fractionalcoloringentropy} by a factor of $r^{1/r}$, while removing the linearity assumption.
Our second result attains the coefficient proposed in~\cite[Conjecture~5.3]{dhawan2026fractionalcoloringentropy} on the uncrowded class, and extends the same bound to linear Berge-$C_4$-free hypergraphs that may contain Berge triangles.

\subsection{Main results}

We should first introduce some basic notions.
Given a hypergraph $G=(V,E)$ with an order $\prec$ on $V$, the \emphd{head} and \emphd{neck} of an edge $e\in E$, denoted by $m(e)$ and $m^*(e)$, are its largest and second largest vertices.
Put
\[
e^-\defeq e\setminus \{m(e)\},\;e\in E, \qquad \B(v)\defeq \{e\in E\;:\; m(e)=v\},\; v\in V.
\]
We call $e^-$ the \emphd{tail} of $e$, and the edges in $\B(v)$ \emphd{backward edges} of $v$.
For $x\prec v$, define 
\[
 E(x,v)=\{e\in E\;:\; m^*(e)=x,\; m(e)=v \}.
\]
The order $\prec$ is called \emphd{good} if for any $f\in E$ and any vertex $x\notin f$,
\begin{equation}\label{eq:good}
|\{  v\in f \;:\; x\prec v,\;  E(x,v)\neq\emptyset\}|\leq 1 .
\end{equation}
Thus in a good order, a vertex outside a fixed edge can potentially affect at most one later vertex of that edge. 
Every order of a triangle-free hypergraph is good, as verified at the start of Section \ref{sec:triangle}.

A \emphd{weight function} is a map $w:V\to(0,\infty)$ such that for any $S\subseteq V$,
\[
w(S)=\sum_{v\in S}w(v).
\]
In particular, $w(\varnothing)=0$; no upper bound on $w(S)$ is imposed.
Separately, an \emphd{activity vector} is a map $a:V\to(0,1)$ specifying the initial masses in the random process. No additive extension of $a$ to subsets is assumed or used. We write $a_i(u)$ for the \emphd{evolving mass} of an unprocessed vertex $u$ after step $i$, with $a_0(u)=a(u)$.
For an activity vector $a$ and a fixed order, define
\[
\Lambda_a(v):=\sum_{e\in\B(v)}a(m^*(e))
 \prod_{u\in e\setminus\{m^*(e),v\}}(1-e^{-a(u)}),\qquad
 \Gamma_a(v):=\sum_{e\in\B(v)}\prod_{u\in e^-}a(u).
\]
These quantities record two versions of the contribution from the backward edges, corresponding to the selection rules used in the proofs of the two main theorems. Note that for rank at most $R$, always 
\[
\Lambda_a(v)\le\Gamma_a(v),\qquad \Lambda_a(v) = (1+o_R(1))\Gamma_a(v)\quad\text{uniformly in }v\quad \text{as }\max_{u\in V} a(u)\to 0.
\]

%Our two main theorems give distributions on independent set such that the probability of each vertex to be contained is related to the weight of itself and its neighborhood.
The first theorem focuses on triangle-free graphs without uniformity assumption.

\begin{theo}\label{thm:maintriangle}
    Let a hypergraph $G=(V,E)$ have a good total order $\prec$ on $V$. For every $a: V \to (0,1)$, there exists a distribution $\D$ on independent sets of $G$ such that for any $v\in V$,
    \[
    \P_{I\sim \D}(v\in I)\geq (1-e^{-a(v)})\left( 1+\dfrac{\Lambda_a(v)}{\log a(v)} \right).
    \]
    In particular, the conclusion holds for all triangle-free hypergraphs with any order.
\end{theo}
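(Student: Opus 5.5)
The plan is to build $I$ with an ordered random pick process in the style of Martinsson and Steiner. Vertices are processed in increasing order of $\prec$. Each unprocessed vertex $u$ carries an evolving mass $a_i(u)$, with $a_0(u)=a(u)$, and masses only decrease. When a vertex $x$ is processed, it is put into $I$ with probability $1-e^{-a_{\mathrm{cur}}(x)}$, using fresh independent randomness, provided this creates no edge inside $I$.

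Masses are updated as follows. Suppose $x$ has just entered $I$, and let $e\in E(x,v)$ be an edge whose other tail vertices $e\setminus\{x,v\}$ are all already in $I$. Then $e$ now threatens $v$, and we multiply the mass of $v$ by $a(v)$. If $a(v)^{k}\le$ the threshold at which $v$ could complete an edge, we simply declare $v$ blocked, i.e.\ set its mass to $0$. The only requirement is that $v$ is never added when some $e\in\B(v)$ has $e^-\subseteq I$. So every threatening edge must either be recorded by a multiplicative factor $a(v)$ or force mass $0$. The exact form of this selection rule is a parameter of the argument.

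With this rule the final mass of $v$ is $a(v)\,a(v)^{Y_v}$, where $Y_v$ counts backward edges $e\in\B(v)$ that become fully occupied on $e^-$, and the process outputs an independent set. It then suffices to show
\[
\E\bigl[1-e^{-a(v)a(v)^{Y_v}}\bigr]\ \ge\ (1-e^{-a(v)})\,\E\bigl[a(v)^{Y_v}\bigr]\ \ge\ (1-e^{-a(v)})\,a(v)^{\E Y_v}\ \ge\ (1-e^{-a(v)})\Bigl(1+\E Y_v\log a(v)\Bigr).
\]
The first inequality is concavity: $1-e^{-at}\ge t(1-e^{-a})$ for $t\in[0,1]$. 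The second is Jensen, since $y\mapsto a^y$ is convex. The third is $a^y\ge 1+y\log a$. This reduces the theorem to the estimate $\E Y_v\le \Lambda_a(v)$.

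To bound $\E Y_v$, fix $e\in\B(v)$ with neck $x=m^*(e)$. For $e^-\subseteq I$ we need each $u\in e\setminus\{x,v\}$ to be selected. Since masses never exceed $a(u)$, this has probability at most $1-e^{-a(u)}$ at the moment $u$ is processed. We also need $x$ to be selected, with probability at most $1-e^{-a(x)}\le a(x)$. Multiplying gives exactly the summand of $\Lambda_a(v)$, and linearity of expectation finishes.

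The main obstacle is justifying this product bound, since the selection events of the tail vertices are correlated through the mass updates. My plan is to condition step by step in the order $\prec$. When $u\in e^-$ is processed, its conditional selection probability given the past is at most $1-e^{-a(u)}$, because its mass has only been decreased. A supermartingale, or sequential-conditioning, argument then bounds the probability of the conjunction by the product of these conditional bounds.

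The delicate point is that decreases of the masses of vertices in $e$ might themselves be correlated with the earlier selections in $e$ in a way that breaks the one-sided bound. This is where the good-order condition \eqref{eq:good} enters. A vertex $x'\notin f$ can influence the mass of at most one later vertex of any fixed edge $f$. Hence the updates triggered outside $e$ never couple two vertices of $e^-$, and the conditional bounds multiply.

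The final sentence of the theorem then follows from the remark that every order of a triangle-free hypergraph is good.
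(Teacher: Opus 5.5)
\textbf{There is a genuine gap: your reduction does not bound $\P(v\in I)$, and the process itself is too weak.} The three inequalities in your chain are each correct. The problem is what they are applied to.

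Once some $e\in\B(v)$ has $e^-\subseteq I$, adding $v$ would put $e$ inside $I$, so $v$ must be blocked outright. Multiplying its mass by $a(v)$ leaves it positive. There is also no intermediate ``threshold'': a single fully occupied tail already forbids $v$. With blocking enforced (your clause ``provided this creates no edge''), your process gives
\[
\P(v\in I)=(1-e^{-a(v)})\,\P(Y_v=0),
\]
not $\E[1-e^{-a(v)a(v)^{Y_v}}]$. Your first inequality is therefore applied to a quantity that is actually $0$ whenever $Y_v\geq 1$. From $\E Y_v\le\Lambda_a(v)$ you then only get $(1-e^{-a(v)})(1-\Lambda_a(v))$.

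This is a defect of the process, not just of the analysis. Take a star of $d$ backward edges of $v$ with pairwise disjoint tails and $a\equiv\alpha$; this is triangle-free. Your process then has $\P(v\in I)\approx\alpha e^{-\Lambda_a(v)}$ with $\Lambda_a(v)\approx d\alpha^r$. With $d\alpha^r\approx\log(1/\alpha)/(r+1)$, the regime of \Cref{cor:fractriangle}, this is about $\alpha^{1+1/(r+1)}$. The theorem promises about $\frac{r}{r+1}\alpha$.

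\textbf{The missing ingredient is compensation plus a fixed-target change of measure.} In the paper's process, suppose $v_i$ could complete a backward edge of a later $v_j$ ($L_i(v_j)=1$). Then the mass of $v_j$ becomes $0$ if $v_i$ is selected, and is multiplied by $1/(1-q_i)=e^{a_{i-1}(v_i)}$ otherwise. So masses are martingales and can grow.

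One then compares with a modified process in which every $v_i$ with $\widetilde L_i(v)=1$ is forced to be unselected. There the target ends with mass $a(v)e^{X_v}$, where $X_v=\sum_{i<k}\widetilde a_{i-1}(v_i)\widetilde L_i(v)$. Each history has true probability equal to $e^{-X_v}$ times its modified probability, so $\P(v\in I_n)=\tE[e^{-X_v}(1-e^{-a(v)e^{X_v}})]$. \Cref{ineq:e-t}, i.e.\ concavity of $t\mapsto e^{-t}(1-e^{-\alpha e^t})$ on $[0,\log(1/\alpha)]$, turns this into a loss linear in $X_v$ with slope $1/\log(1/a(v))$. This is also why the neck enters $\Lambda_a$ through its mass $a(m^*(e))$ rather than a selection probability.

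\textbf{Your second half must change accordingly.} Because masses grow, ``each tail vertex is selected with conditional probability at most $1-e^{-a(u)}$'' is false. The paper instead shows, with $S=e\setminus\{m^*(e),v\}$, that
$\widetilde a_j(m^*(e))\mathbf 1_{\{S\cap V_j\subseteq\widetilde I_j\}}\prod_{u\in S\setminus V_j}(1-e^{-\widetilde a_j(u)})$
is a supermartingale. Goodness is exactly what makes this work. A step by a vertex outside $e$ affects at most one of $v$, $m^*(e)$ and the unprocessed tail vertices. Hence the martingale property of masses and concavity of $1-e^{-t}$ suffice, and a forced step, which inflates masses deterministically, changes no factor. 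In your monotone process the product bound would follow from the chain rule with no order condition at all. So goodness is not playing the role you assign to it.
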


The second result concerns linear Berge-$C_4$-free hypergraphs.

\begin{theo}\label{thm:mainC4}
For every integer $r\geq1$ and every $\epsilon\in(0,1)$, there exists $\delta=\delta(r,\epsilon)>0$ such that the following holds.
Let $G=(V,E)$ be a linear Berge-$C_4$-free $(r+1)$-uniform hypergraph.
For every total order $\prec$ on $V$ and every $a:V\to(0,1)$, there exists a distribution $\mathcal D$ on the independent sets of $G$ such that, simultaneously for every $v\in V$,
\begin{equation*}
    \P_{I\sim\mathcal D}(v\in I)
    \geq
    (1-\epsilon)
    \min\left\{
        a(v),\,
        \delta e^{-\Gamma_a(v)}
    \right\}.
\end{equation*}
\end{theo}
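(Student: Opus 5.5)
We will prove \Cref{thm:mainC4} by running an ordered random pick process of Martinsson--Steiner type along $\prec$, and then patching vertices that fail with a cleanup step. Process the vertices in increasing order. Each vertex $u$ carries an evolving mass $a_i(u)$, with $a_0(u)=a(u)$. When $v$ is reached, we put $v$ into a tentative set $J$ with probability $a_{v^-}(v)$, its current mass. The selection rule is the following. Whenever a backward edge $e\in\B(v)$ has its whole tail $e^-$ already in $J$, the vertex $v$ is forbidden, so the output $J$ is independent. To keep the probability that $v$ survives under control, we shrink masses multiplicatively. When a vertex $x$ is selected, each later vertex $v$ with $E(x,v)\neq\emptyset$ has its mass multiplied by $\exp(-\sum_{e\in E(x,v)}\prod_{u\in e^-\setminus\{x\}}a(u)/\ldots)$, in the style of an exponential-weights update. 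The update is arranged so that the expected multiplicative decrement of $v$'s mass over the whole process is $e^{-\Gamma_a(v)}$. Concretely, we aim for the product form $\P(v\in J)\approx a(v)\prod_{e\in\B(v)}(1-\prod_{u\in e^-}a(u))\ge a(v)e^{-(1+o(1))\Gamma_a(v)}$.

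The key steps come in order.

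\textbf{Step 1 (martingale identity).} Show that the mass of $v$, suitably normalized, is a martingale or supermartingale in the step index, and that the event of an edge $e\in\B(v)$ becoming fully selected correlates with $v$'s mass in a controlled way. This step uses linearity and Berge-$C_4$-freeness. Two backward edges $e,f\in\B(v)$ meet only at $v$, so their tails are disjoint. The absence of Berge $C_4$ means that no vertex $x$ can influence two vertices lying in tails of distinct edges through $v$ via edges through $x$, since such a configuration would form a Berge $4$-cycle through $v$. Hence the events $\{e^-\subseteq J\}$ for $e\in\B(v)$ are close to independent, with each probability at most $\prod_{u\in e^-}a(u)$. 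Berge triangles are allowed, so a vertex $x$ may lie in edges to both $v$ and some $u\in e^-$. This creates only a positive-correlation-type effect that we bound by Harris/FKG-type monotonicity: masses only decrease, so the events are monotone.

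\textbf{Step 2 (small masses).} Handle the case where masses are not small. Split into $\Gamma_a(v)\le\log(1/\delta)$ and $\Gamma_a(v)>\log(1/\delta)$. In the first case the target is $\min\{a(v),\delta e^{-\Gamma}\}$. We first rescale $a$ by a factor $\eta=\eta(r,\epsilon)$ so that all masses are small and $\Lambda=(1+o(1))\Gamma$. Since $\Gamma$ scales by $\eta^r$, this loss is absorbed into the constant $\delta$, while the factor $1-\epsilon$ accounts for the $(1+o(1))$ errors.

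\textbf{Step 3 (iteration).} Iterate: repeat the process on the vertices not yet covered and average the resulting distributions, so that the bound becomes the minimum of $a(v)$ and $\delta e^{-\Gamma_a(v)}$ rather than their product.

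The main obstacle is Step 1, the decorrelation inside $\B(v)$ in the presence of Berge triangles. The plan is to condition on the history up to the neck $m^*(e)$ of each edge. We then use that for distinct $e,f\in\B(v)$, any vertex affecting both tails would yield a Berge $C_4$ $v,e,\cdot,\cdot,f$. This should give
$\P(v\in J\mid\mathcal F)\ge a(v)\prod_e(1-\prod_{u\in e^-}a_{\text{final}}(u))$. Taking expectations and applying Jensen to the log then gives the bound.
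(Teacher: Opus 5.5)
Your plan has a genuine gap: it is built to lose exactly the factor the theorem does not allow you to lose. Your masses only shrink, and you target $\P(v\in J)\geq a(v)e^{-(1+o(1))\Gamma_a(v)}$. But whenever $a(v)\leq\delta e^{-\Gamma_a(v)}$, the theorem demands $\P(v\in I)\geq(1-\epsilon)a(v)$ with no factor $e^{-\Gamma_a(v)}$ at all. In that regime $\Gamma_a(v)$ can be as large as $\log(\delta/a(v))$.

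For instance, take a degeneracy order and $a\equiv\alpha=((1-\theta)\log d/(rd))^{1/r}$. One can have $\Gamma_a(v)=(1-\theta)\log d/r$, so the minimum is $\alpha$ for large $d$, while your bound gives only $\alpha d^{-(1-\theta)/r}$. Even optimizing $a e^{-da^r}$ over $a$ yields only order $d^{-1/r}$. That is the local-lemma bound, without the logarithmic gain that \Cref{thm:c4-weighted-local} and \Cref{cor:c4-fractional} extract from the theorem.

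Step~3 cannot convert the product into a minimum. A mixture of distributions has marginals that are convex combinations of the individual marginals, so averaging never beats the best single run. Taking unions of outputs of several runs destroys independence. Likewise, rescaling $a$ by a small constant $\eta$ in Step~2 costs a factor $\eta$ in front of $a(v)$. This cannot be absorbed into $\delta$, since $\delta$ multiplies only the other term of the minimum. Finally, Harris/FKG is not available as stated: the process is adaptive, so the selection events are not monotone functions of independent inputs.

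The missing idea is compensation. In the paper's process, when $L_i(v_j)=1$ the mass of $v_j$ becomes $0$ if $v_i$ is selected and $a_{i-1}(v_j)/(1-q_i)$ otherwise. So $a_{k-1}(v)$ is a martingale with mean $a(v)$ (\Cref{prop:process}), and the loss $e^{-\Gamma}$ you are fighting is moved into fluctuations of $a_{k-1}(v)$. With the capped selection function $p_\rho(t)=\min\{t,\rho\}$ one gets $\P(v\in I_n)=\E\min\{a_{k-1}(v),\rho\}\geq a(v)-\E a_{k-1}(v)^2/\rho$.

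The real work is the second-moment bound $\E a_{k-1}(v)^2\leq a(v)^2e^{\kappa_\rho\Gamma_a(v)}$ of \Cref{lem:c4-moments}. It is proved by propagating $Y_i(S)=a_i(v)^2\prod_{e\in S}Z_i(e)$ backwards over subsets $S\subseteq\B(v)$. Your Berge-$C_4$ observation is exactly \Cref{lem:c4-separation}\ref{lem:c4-separation1}. Berge triangles, however, are handled by counting rather than correlation inequalities. Each backward edge is coupled with at most $T_r=r\binom{r+1}{2}$ vertices, and the cap makes each coupled step cost at most a factor $(1-\rho)^{-r}$.

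The minimum then comes from a single rerun with activities $a'=(1-\epsilon/2)\min\{a,\delta e^{-\Gamma_a}\}$, not from iteration. For these activities $a'(v)e^{\kappa_\rho\Gamma_{a'}(v)}/\rho\leq\delta/\rho=\epsilon/2$.
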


\subsection{Applications}

Given a hypergraph $G=(V,E)$, the \emphd{degree} and \emphd{neighborhood} of a vertex $v\in V$ are defined by
\[
 d_G(v)\defeq\bigl|\set{e\in E:v\in e}\bigr|, \qquad 
 N_G(v)\defeq  \set{u\in V\setminus\set{v}\;:\;\exists\; e\in E,\; \set{u,v}\subseteq e}. 
\]
When $G$ is $(r+1)$-uniform, 
define the \emphd{weighted degree} of a vertex $v$ under a weight function $w$ by
\[
d_w(v) \defeq \sum_{e\ni v}
    \left(\prod_{u\in e\setminus\set{v}}
        \frac{w(u)}{w(v)}\right)^{1/r}.
\]
Taking $w \equiv 1$ gives $d_w =d_G$ for every $r$.
For $r\geq 1$ and $x \ge 0$, set
\[
\ell_r(x)\defeq \left( \dfrac{\log(x+3)}{r(r+1)(x+3)} \right)^{1/r},\qquad F_r(x)\defeq (1-e^{-\ell_r(x)})\left( 1+\dfrac{x\ell_r(x)^r}{\log \ell_r(x)}  \right).
\]
As we will see in \Cref{lem:Frx}, $F_r$ is positive on $[0,\infty)$ and
\[
F_r(x)=\left( \dfrac{r}{r+1} (r(r+1))^{-1/r}+o_r(1) \right) \left(\dfrac{\log x}{x}\right)^{1/r}.
\]
The first applications are hypergraph versions of the local Shearer bound in \cite[Theorem 1.2]{Martinsson2025RandomInd} with a constant-factor loss. 
\begin{corl}[Local Shearer bound for triangle-free hypergraphs]\label{cor:localshearerbound}
    Let  $r\geq 1$ be an integer and let $G=(V,E)$ be a triangle-free $(r+1)$-uniform hypergraph. For every weight function $w: V \to (0,\infty)$, there exists a random independent set $I$ such that
    \[
    \P(v\in I)\geq F_r(d_w(v))
    \]
    for every $v\in V$.
\end{corl}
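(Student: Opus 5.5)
The plan is to apply \Cref{thm:maintriangle} with the activity $a(v)\defeq \ell_r(d_w(v))$ and an order adapted to $w$ and $a$. Since $G$ is triangle-free, every total order is good, so we may pick the order freely. First, $a$ is a valid activity vector: for $x\geq 0$ the function $\log(x+3)/(x+3)$ is at most $\log 3/3$ (it is decreasing for $x+3\geq e$). Hence $\ell_r(x)^r\leq \log 3/(3r(r+1))<1$, so $a:V\to(0,1)$.

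For the order, set $g(u)\defeq a(u)\,w(u)^{-1/r}$ and take any total order $\prec$ with $u\prec v\Rightarrow g(u)\leq g(v)$, breaking ties arbitrarily.

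The key step is to bound $\Lambda_a(v)\leq \Gamma_a(v)$ by $d_w(v)\,a(v)^r$. For a backward edge $e\in\B(v)$, every $u\in e^-$ satisfies $u\prec v$, so $g(u)\leq g(v)$. As $|e^-|=r$,
\[
\prod_{u\in e^-}a(u)=\prod_{u\in e^-}g(u)\,w(u)^{1/r}\leq g(v)^r\prod_{u\in e^-}w(u)^{1/r}
= a(v)^r\Bigl(\prod_{u\in e\setminus\set{v}}\frac{w(u)}{w(v)}\Bigr)^{1/r}.
\]
Summing over $e\in\B(v)$, and then enlarging the sum to all edges containing $v$ (all terms are positive), gives $\Gamma_a(v)\leq a(v)^r d_w(v)$. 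Since $\Lambda_a\leq\Gamma_a$, the same bound holds for $\Lambda_a(v)$.

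To conclude, note that $\log a(v)<0$, so $\Lambda_a(v)/\log a(v)\geq a(v)^r d_w(v)/\log a(v)$. Since $1-e^{-a(v)}>0$, \Cref{thm:maintriangle} then yields
\[
\P(v\in I)\geq (1-e^{-a(v)})\Bigl(1+\frac{d_w(v)\,\ell_r(d_w(v))^r}{\log \ell_r(d_w(v))}\Bigr)=F_r(d_w(v)).
\]
The monotonicity step is clean: the lower bound only decreases when $\Lambda_a$ is replaced by something larger. The only genuine analytic input is that $F_r$ is positive, so that the statement is nontrivial; this, together with the asymptotics, is \Cref{lem:Frx}. The main point to get right is therefore the choice of order by $g=a\,w^{-1/r}$, which converts the order-dependent quantity $\Gamma_a$ into the order-free weighted degree $d_w$.
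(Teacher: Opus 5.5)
Your proposal is correct and follows the paper's proof essentially step for step: the same activity $a(v)=\ell_r(d_w(v))$, the same order by nondecreasing $a(v)/w(v)^{1/r}$, the same chain $\Lambda_a(v)\leq\Gamma_a(v)\leq a(v)^r d_w(v)$, and the same sign argument using $\log a(v)<0$ in \Cref{thm:maintriangle}. Your direct check that $a$ takes values in $(0,1)$ is a harmless addition; the paper gets this from \Cref{lem:Frx}.
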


\begin{corl}[Local Shearer bound for uniform linear Berge-$C_4$-free hypergraphs]
\label{thm:c4-weighted-local}
    For every integer $r\geq 1$ and every $\epsilon\in(0,1)$, there exists $D=D(r,\epsilon) > 1$ such that the following holds.
    Let $G=(V,E)$ be a linear Berge-$C_4$-free $(r+1)$-uniform hypergraph with weight function $w$.
    There is one random independent set $I$ such that every vertex $v$ with $d_w(v)\geq D$ satisfies
    \begin{equation*}
    \P(v\in I)
    \geq
    (1-\epsilon)
    \left(
        \frac{\log d_w(v)}
             {r\,d_w(v)}
    \right)^{1/r}.
    \end{equation*}
\end{corl}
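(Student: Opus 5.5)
We will deduce the corollary from \Cref{thm:mainC4} by choosing a suitable order $\prec$ and activity vector $a$ built from $w$. Fix $\epsilon$, let $\epsilon'=\epsilon/3$, take $\delta=\delta(r,\epsilon')$ from \Cref{thm:mainC4}, and order $V$ by increasing weight (ties broken arbitrarily), so that $w(u)\le w(v)$ for every $u\in e^-$ with $e\in\B(v)$. We set
\[
a(v)\defeq\left(\frac{(1-\eta)\log(d_w(v)+C)}{r\,(d_w(v)+C)}\right)^{1/r},
\]
with $\eta=\eta(\epsilon)$ small and $C$ a large constant; for $D$ large these values lie in $(0,1)$, and for $d_w(v)\ge D$ we have $a(v)\ge(1-\epsilon')\left(\log d_w(v)/(r\,d_w(v))\right)^{1/r}$.

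By \Cref{thm:mainC4}, $\P(v\in I)\ge(1-\epsilon')\min\{a(v),\delta e^{-\Gamma_a(v)}\}$, so it suffices to show $\delta e^{-\Gamma_a(v)}\ge a(v)$ whenever $d_w(v)\ge D$. Equivalently, we need $\Gamma_a(v)\le \log(1/a(v))+\log\delta$. Since $\log(1/a(v))=\tfrac1r\log d_w(v)+O(\log\log d_w(v))$, it is enough to prove that
\[
\Gamma_a(v)\le\frac{1-\eta/2}{r}\log d_w(v)+O_r(1)
\]
and then absorb $\log\delta$ and the lower-order terms by taking $D$ large.

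The main step, and the expected obstacle, is bounding $\Gamma_a(v)=\sum_{e\in\B(v)}\prod_{u\in e^-}a(u)$ using information at $v$ only. We will first try to compare $a(u)$ with $w$: the scaling $d_w(v)\propto w(v)^{-1}\sum_{e\ni v}\prod_{u\ne v}w(u)^{1/r}$ suggests a pointwise bound of the form $a(u)\lesssim (w(u)/w(v))^{1/r}\,a(v)$ for $u\prec v$. The map $x\mapsto(\log x/x)^{1/r}$ is decreasing, so whenever $d_w(u)\ge d_w(v)\,w(v)/w(u)$ this bound follows directly. Granting it, we get
\[
\prod_{u\in e^-}a(u)\le a(v)^r\Bigl(\prod_{u\in e^-}\tfrac{w(u)}{w(v)}\Bigr)^{1/r},
\]
and summing over $e\in\B(v)$ gives $\Gamma_a(v)\le a(v)^r d_w(v)\approx(1-\eta)\log d_w(v)/r$, which is what we need. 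When the comparison between $d_w(u)$ and $w(u)$ fails, we expect to have to modify $a$: for instance, replace $a(u)$ by a minimum over a weight-based cap, or redefine $a$ through $w$ directly, say $a(u)=c\,w(u)^{1/r}\cdot(\text{normalisation})$. One would then check that vertices with small $d_w$ are unaffected, since they only require the weaker guarantee. Making this comparison rigorous, possibly by an iteration or a local rescaling of $w$ in the spirit of Martinsson--Steiner, is where most of the work will lie.
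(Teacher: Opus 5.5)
There is a genuine gap, and it is the step you flagged yourself. You never establish $\Gamma_a(v)\le a(v)^r d_w(v)$, and with the order you fixed (increasing $w$) the estimate on $\Gamma_a(v)$ that you need is false.

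Here is an example. Let $v$ have weight $W$ and lie in $N=WD$ edges that pairwise meet only in $v$. Give all other vertices weight $1$ and no further edges; this sunflower is linear and Berge-$C_4$-free. Then $d_w(v)=D$, and every petal vertex $u$ has $d_w(u)=W^{1/r}$. All petal vertices precede $v$, so every edge lies in $\B(v)$, and $\Gamma_a(v)=N\,a(u)^r\approx(1-\eta)D\,W^{1-1/r}\log W/r^2$. This is unbounded as $W\to\infty$ for every $r\ge1$. Consequently \Cref{thm:mainC4} only gives $\delta e^{-\Gamma_a(v)}$ at $v$, which is far below the target $(1-\epsilon)(\log D/(rD))^{1/r}$. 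The remedies you list (caps, redefining $a$ through $w$, iterative rescaling) are not what is needed.

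The missing idea is to use the freedom in the order. \Cref{thm:mainC4} holds for every total order, and your $a$ is built from $d_w$, which does not depend on the order. So fix $a$ first, and then order $V$ by nondecreasing $a(v)/w(v)^{1/r}$, as the paper does.
\begin{itemize}
\item Then $u\prec v$ gives $a(u)\le a(v)\,(w(u)/w(v))^{1/r}$ by construction, whatever the degree of $u$.
\item Multiplying over $e^-$, summing over $e\in\B(v)$, and enlarging to all edges through $v$ yields $\Gamma_a(v)\le a(v)^r d_w(v)$.
\item In the example above, for large $W$ this order places $v$ before its petals, so $\B(v)=\varnothing$.
\end{itemize}
With this single change the rest of your plan goes through. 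Your shifted $a$ lies in $(0,1)$. For $d=d_w(v)\ge D$ one gets $a(v)e^{\Gamma_a(v)}\le\bigl((1-\eta)\log(d+C)/r\bigr)^{1/r}(d+C)^{-\eta/r}\le\delta$, so the minimum in \Cref{thm:mainC4} equals $a(v)$. Your bookkeeping $(1-\epsilon/3)^2\ge1-\epsilon$ then finishes the proof. The paper's argument differs only cosmetically, setting $a=1/2$ at vertices with $d_w<D$ instead of shifting by $C$.
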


The next applications bound the fractional chromatic number for $d$-degenerate hypergraphs.

\begin{corl}\label{cor:fractriangle}
    For every fixed integer $r\geq1$,  every $d$-degenerate triangle-free $(r+1)$-uniform hypergraph $G$ satisfies 
    \[
    \chi_f(G)\leq \left( \dfrac{r+1}{r} (r(r+1))^{1/r}+o_r(1) \right)\left( \dfrac{d}{\log d} \right)^{1/r}.
    \]
\end{corl}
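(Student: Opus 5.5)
We derive the corollary from \Cref{thm:maintriangle}, applied with the degeneracy order and a constant activity vector, rather than from the weighted local bound of \Cref{cor:localshearerbound}.

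\textbf{Setup.} Fix a $d$-degenerate triangle-free $(r+1)$-uniform hypergraph $G$. Take an ordering $\prec$ of $V$ in which every vertex is the head of at most $d$ edges, so that $|\B(v)|\le d$ for every $v$. Since $G$ is triangle-free, every order is good, and \Cref{thm:maintriangle} applies to $\prec$. We choose the constant activity $a(u)\equiv a$, with $a\in(0,1)$ to be optimized.

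\textbf{The resulting bound.} Every backward edge of $v$ has $r$ tail vertices other than $v$. Using $1-e^{-a}\le a$, each backward edge contributes at most $a\cdot a^{r-1}=a^r$ to $\Lambda_a(v)$, so $\Lambda_a(v)\le d\,a^r$. Because $\log a<0$, the factor $1+\Lambda_a(v)/\log a$ is smallest when $\Lambda_a(v)$ is largest. Hence \Cref{thm:maintriangle} gives a distribution with
\[
\P(v\in I)\ge (1-e^{-a})\left(1+\frac{d a^r}{\log a}\right)
\]
for every $v$. This lower bound is uniform in $v$, so its reciprocal bounds $\chi_f(G)$.

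\textbf{Optimizing $a$.} Write $a=(c\log d/d)^{1/r}$. Then $\log a=-\tfrac1r(1+o(1))\log d$ and $d a^r=c\log d$, so
\[
1+\frac{d a^r}{\log a}=1-rc+o(1).
\]
The lower bound is therefore
\[
(1+o(1))\,c^{1/r}(1-rc)\left(\frac{\log d}{d}\right)^{1/r}.
\]
We maximize $g(c)=c^{1/r}(1-rc)$ over $c\in(0,1/r)$. Setting $g'(c)=0$ gives $\tfrac1r c^{1/r-1}(1-rc)=r c^{1/r}$, i.e.\ $1-rc=r^2c$, so $c=1/(r(r+1))$. At this point $1-rc=r/(r+1)$ and
\[
g(c)=\frac{r}{r+1}(r(r+1))^{-1/r}.
\]
This choice of $c$ is exactly the parameter in $\ell_r$, and taking $a=\ell_r(d)$ one could instead reuse \Cref{lem:Frx} with $x=d$, which yields $F_r(d)$. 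Inverting the bound gives
\[
\chi_f(G)\le\left(\frac{r+1}{r}(r(r+1))^{1/r}+o_r(1)\right)\left(\frac{d}{\log d}\right)^{1/r}.
\]

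\textbf{Obstacles.} The main one is bookkeeping. For $r=1$ the product defining $\Lambda_a$ is empty, and the bound $\Lambda_a(v)\le d\,a$ still holds directly. Small $d$ is absorbed into the $o_r(1)$ term, since the result is only asymptotic in $d$. We must also check that $a\in(0,1)$ for large $d$, which holds because $a\to0$.
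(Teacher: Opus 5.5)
Your proposal is correct and follows essentially the same route as the paper. Both arguments use a degeneracy order (good by triangle-freeness), a constant activity, the bound $\Lambda_a(v)\le d\,a(1-e^{-a})^{r-1}\le d\,a^r$, and then \Cref{thm:maintriangle} with \Cref{lem:Frx} to obtain the uniform marginal $F_r(d)$. The only difference is that you derive the optimal constant $c=1/(r(r+1))$ by calculus, whereas the paper plugs in $a=\ell_r(d)$ directly; its $d+3$ in place of $d$ does not change the asymptotics.
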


\begin{corl}\label{cor:c4-fractional}
For every fixed integer $r\geq1$, every linear
Berge-$C_4$-free, $d$-degenerate $(r+1)$-uniform
hypergraph $G$ satisfies
\begin{equation*}
    \chi_f(G)
    \leq
    \bigl(1+o_r(1)\bigr)
    \left(\frac{rd}{\log d}\right)^{1/r}.
\end{equation*}
\end{corl}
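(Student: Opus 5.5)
We derive \Cref{cor:c4-fractional} from \Cref{thm:mainC4}, using the degeneracy order and a constant activity vector. Fix $\epsilon\in(0,1)$ and let $\delta=\delta(r,\epsilon)$ be given by \Cref{thm:mainC4}. Let $G$ be linear, Berge-$C_4$-free, $(r+1)$-uniform and $d$-degenerate. We take $\prec$ to be a degeneracy order, so that every vertex $v$ is the head of at most $d$ edges, that is, $|\B(v)|\le d$. Put $a\equiv\alpha$ for a constant $\alpha\in(0,1)$ to be chosen. Since every edge in $\B(v)$ has exactly $r$ tail vertices,
\[
\Gamma_a(v)=\sum_{e\in\B(v)}\alpha^{r}\le d\alpha^r .
\]
\Cref{thm:mainC4} then gives a single distribution $\D$ on independent sets with
\[
\P(v\in I)\ge(1-\epsilon)\min\{\alpha,\;\delta e^{-d\alpha^r}\}\quad\text{for every }v,
\]
and hence $\chi_f(G)\le\bigl((1-\epsilon)\min\{\alpha,\delta e^{-d\alpha^r}\}\bigr)^{-1}$.

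Next we optimize $\alpha$. Choose
\[
\alpha=\left(\frac{(1-\eta)\log d}{rd}\right)^{1/r}
\]
for a small $\eta>0$. Then
\[
d\alpha^r=\frac{1-\eta}{r}\log d,\qquad e^{-d\alpha^r}=d^{-(1-\eta)/r}.
\]
On the other hand, $\alpha=\Theta_r\bigl((\log d/d)^{1/r}\bigr)$, which is $d^{-1/r}$ times a power of $\log d$. Hence $\delta d^{-(1-\eta)/r}\ge\alpha$ once $d^{\eta/r}\ge\delta^{-1}(\log d)^{1/r}$, which holds for $d\ge d_0(r,\epsilon,\eta)$. For such $d$ the minimum equals $\alpha$, and
\[
\chi_f(G)\le\frac{1}{1-\epsilon}(1-\eta)^{-1/r}\left(\frac{rd}{\log d}\right)^{1/r}.
\]
Letting $\epsilon,\eta\to0$ slowly as $d\to\infty$ yields the claimed $(1+o_r(1))(rd/\log d)^{1/r}$. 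We also need $\alpha<1$, which holds for large $d$.

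The step needing most care is the order of quantifiers. The constant $\delta$ depends on $\epsilon$, and it is not uniform as $\epsilon\to0$. So the argument is: for fixed $\epsilon$ and $\eta$ there is $d_0$ beyond which the bound above holds, and then the $o_r(1)$ statement follows by a standard diagonal choice of $\epsilon=\epsilon(d)\to0$. For bounded $d$ nothing needs to be shown. Alternatively, one could invoke \Cref{thm:c4-weighted-local} with $w\equiv1$, but there the degree is the full degree rather than the backward degree. Working with \Cref{thm:mainC4} directly through the degeneracy order avoids this issue. The only remaining check is that the theorem allows an arbitrary total order, which it does, so the degeneracy order is admissible.
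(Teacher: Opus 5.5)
Your proposal is correct and follows essentially the same route as the paper: a degeneracy order with a constant activity $\alpha=\bigl((1-\eta)\log d/(rd)\bigr)^{1/r}$, the bound $\Gamma_a(v)\le d\alpha^r$, the check that the minimum in \Cref{thm:mainC4} equals $\alpha$ for large $d$, and then letting the parameters tend to zero. The only difference is cosmetic: the paper uses a single parameter $\theta$ for both your $\epsilon$ and $\eta$, which gives $\chi_f(G)\le(1-\theta)^{-1-1/r}(rd/\log d)^{1/r}$.
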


For $r=1$, \Cref{cor:fractriangle} recovers \cite[Theorem 1.7]{Martinsson2025RandomInd}, and \Cref{cor:c4-fractional} recovers the upper bound in \cite[Theorem 1.3]{allen2026sharpboundsfractionalchromatic}. For $r \ge 2$, \Cref{cor:fractriangle} improves the coefficient in \cite[Theorem 1.4]{dhawan2026fractionalcoloringentropy} by a factor of $r^{1/r}$ and does not require linearity.  The leading constant in \Cref{cor:c4-fractional} matches that in the
maximum-degree list-coloring bound in~\cite{yu2026listcoloringnumberuncrowded} and \cite{dhawan2026listcoloringuncrowdedhypergraphs}; here, however, the parameter is degeneracy and the conclusion is for fractional coloring.

\subsection{Proof overview}

Both arguments are based on the ordered random pick process in \cite{Martinsson2025RandomInd}, extended to hypergraphs. When the current vertex is selected with probability $q_i=p(a_{i-1}(v))$ with a selection function $p$, an affected future mass $B$ is set to zero; on the other branch it becomes $B/(1-q_i)$. Thus its conditional expectation remains $B$.

For \Cref{thm:maintriangle}, we use $p(t)=1-e^{-t}$ and the fixed-target modification from~\cite[Section 4]{Martinsson2025RandomInd}. 
In the graph case, one only needs to track the mass of an earlier neighbor of the target. In the hypergraph setting, one must also record whether the other earlier vertices of a backward edge have been selected. We encode this information in an auxiliary product. The good-order condition ensures that an outside step affects at most one factor of this product, leading to the required estimate.

For \Cref{thm:mainC4}, we use the capped selection function $p_\rho(t)=\min\{t,\rho\}$ and a second-moment argument related to~\cite{allen2026sharpboundsfractionalchromatic}. The cap gives uniform control on the compensation factors on vertex weights while preserving the first moment of each vertex weight. To bound the second moment, we track products associated with backward edges and iterate the resulting one-step bounds over their subsets. Linearity and Berge-$C_4$-freeness ensure that the steps where several relevant factors can interact are sufficiently sparse, allowing the accumulated loss to be bounded.

\subsection{Organization}
\Cref{sec:prelim} gives the common process and auxiliary lemmas. \Cref{sec:triangle} and \Cref{sec:C4} prove the two main theorems. \Cref{sec:corollaries} proves the applications.

\subsection*{Note added}
In version~4 of~\cite{dhawan2026fractionalcoloringentropy}, Dhawan announces forthcoming joint work proving the conjectured leading-coefficient degeneracy bound, together with a local-demand version, for uncrowded hypergraphs.
Our second theorem has a different scope: it applies to linear Berge-$C_4$-free hypergraphs, and hence allows Berge triangles.
At the time of writing, the cited version contains only this announcement.

\section{The ordered random pick process and auxiliary lemmas}\label{sec:prelim}
All logarithms are natural, and an empty product is one.
Write $V_i:=\{v_1,\ldots,v_i\}$ for the first $i$ vertices in the chosen order, with $V_0= \emptyset$.
The following \emphd{ordered random pick process} will be used both in the proofs of \Cref{thm:maintriangle} and \Cref{thm:mainC4}. 
\if0
Given a hypergraph $G=(V,E)$ with order $v_1\prec\cdots\prec v_n$, initial activities $a\in (0,1)^V$ and a measurable selection function $p\colon [0,\infty)\to [0,1)$ with $p(0)=0$. Initialize $a_0(u)=a(u)$ for every $u \in V$ and $I_0=\0$. 
Before step $i$, put
\[
 q_i:=p(a_{i-1}(v_i)),\qquad
 L_i(v_j):=\mathbf{1}_{\{\exists e\in E(v_i,v_j):\ e\setminus\{v_i,v_j\}\subseteq I_{i-1}\}}
 \quad(j>i).
\]
Then perform one of the following two updates, conditionally on the previous choices.
\begin{enumerate}
    \item With probability $q_i$, set $I_i=I_{i-1}\cup\{v_i\}$. For $j>i$, set $a_i(v_j)=0$ if $L_i(v_j)=1$, and otherwise keep $a_i(v_j)=a_{i-1}(v_j)$.
\item With probability $1-q_i$, set $I_i=I_{i-1}$. For $j>i$, set
\[
 a_i(v_j)=\frac{a_{i-1}(v_j)}{1-q_i}
\]
if $L_i(v_j)=1$, and otherwise keep $a_i(v_j)=a_{i-1}(v_j)$.
\end{enumerate}
\fi
\begin{algorithm}[!htbp]
\caption{Ordered random pick process}
\label{alg:ordered-process}

\begingroup
\raggedright

\setlength{\abovedisplayskip}{5pt}
\setlength{\belowdisplayskip}{5pt}
\setlength{\abovedisplayshortskip}{3pt}
\setlength{\belowdisplayshortskip}{4pt}

\noindent\textbf{Input:}
A hypergraph $G=(V,E)$ with an order
$v_1\prec\cdots\prec v_n$, an initial activity vector
$a\in(0,1)^V$, and a measurable selection function
$p:[0,\infty)\to[0,1)$ with $p(0)=0$.

\smallskip

\noindent\textbf{Output:}
A random independent set $I_n$.

\begin{enumerate}
\item Set
\[
I_0=\varnothing,
\qquad
a_0(u)=a(u)\quad(u\in V).
\]

\item For $i=1,\ldots,n$, put
\[
q_i:=p(a_{i-1}(v_i)),
\]
and, for every $j>i$, define
\[
L_i(v_j):=\mathbf 1_{\{\exists e\in E(v_i,v_j)\,:\,e\setminus\{v_i,v_j\}\subseteq I_{i-1}
\}}.
\]
For every $j>i$ with $L_i(v_j)=0$, set
\[
a_i(v_j)=a_{i-1}(v_j).
\]
Select $v_i$ with probability $q_i$, conditionally on the previous choices.
If $v_i$ is selected, set
\[
I_i=I_{i-1}\cup\{v_i\},
\]
and, for every $j>i$ with $L_i(v_j)=1$, set
\[
a_i(v_j)=0.
\]
If $v_i$ is not selected, set
\[
I_i=I_{i-1},
\]
and, for every $j>i$ with $L_i(v_j)=1$, set
\[
a_i(v_j)
=
\frac{a_{i-1}(v_j)}{1-q_i}.
\]

\item Return $I_n$.

\end{enumerate}

\endgroup
\end{algorithm}

A future vertex is \emphd{affected} at step $i$ when $L_i(v_j)=1$; its mass is updated once, even if several edges witness this event. The evolving masses may exceed one. Only their images under $p$ are selection probabilities. 
Let $\F_i$ denote the information revealed by the first $i$ steps, that is, the $\sigma$-algebra generated by their branch choices, with $\F_0$ trivial.
By construction, both $q_i$ and $L_i(v_j)$ are determined by the history up to step $i-1$, and hence are $\F_{i-1}$-measurable.

\begin{prop}\label{prop:process}
    The ordered random pick process always outputs an independent set. Consequently, its output defines a probability distribution on the independent sets of $G$.
    Moreover, for any $1\leq i<j\leq n$,
    \[
    \E[  a_i(v_j) \mid  \F_{i-1}  ]=a_{i-1}(v_j), \quad \E[  a_i(v_j)]=a(v_j).
    \]
    In particular, for $v=v_k$,
 \[
  \E a_{k-1}(v)=a(v),\qquad
  \P(v\in I_n)=\E p(a_{k-1}(v)).
 \]
\end{prop}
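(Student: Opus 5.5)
We prove the three assertions of \Cref{prop:process} in the order stated: independence of the output, the martingale identity for the masses, and the formula for $\P(v\in I_n)$. The guiding idea is that a vertex $v_j$ whose backward edge $e$ has all vertices other than $v_j$ already in $I$ must have had its mass set to zero at the step that selected the neck of $e$. Once a mass is zero it stays zero, and since $p(0)=0$ the vertex can never be selected.

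\emph{Independence.} We first record that zero masses are absorbing. If $a_{i-1}(v_j)=0$, then after step $i$ the mass is $0$ (selected branch), $0/(1-q_i)=0$ (unselected branch, using $q_i<1$), or unchanged. Now suppose, for contradiction, that $I_n\supseteq e$ for some $e\in E$. Let $v_j=m(e)$ and $v_i=m^*(e)$, so that $e\in E(v_i,v_j)$. Since $v_i\in I_n$, the vertex $v_i$ was selected at step $i$. Every vertex of $e\setminus\{v_i,v_j\}$ precedes $v_i$ and lies in $I_n$, hence lies in $I_{i-1}$. Therefore $L_i(v_j)=1$, and the selected branch sets $a_i(v_j)=0$. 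By absorption, $a_{j-1}(v_j)=0$, so $q_j=p(0)=0$ and $v_j$ is selected with probability $0$. More precisely, on this event $v_j$ is never selected, so $v_j\notin I_n$, a contradiction. Hence $I_n$ is independent almost surely, and the law of $I_n$ is a distribution on independent sets. (If we prefer to avoid probability-zero events, we may take ``select with probability $0$'' to mean ``never select''.)

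\emph{Martingale identity.} Fix $i<j$ and condition on $\F_{i-1}$. Both $q_i$ and $L_i(v_j)$ are $\F_{i-1}$-measurable. If $L_i(v_j)=0$, then $a_i(v_j)=a_{i-1}(v_j)$ deterministically. If $L_i(v_j)=1$, then
\[
\E[a_i(v_j)\mid\F_{i-1}]=q_i\cdot 0+(1-q_i)\frac{a_{i-1}(v_j)}{1-q_i}=a_{i-1}(v_j).
\]
Taking expectations and inducting on $i$ from $a_0(v_j)=a(v_j)$ gives $\E a_i(v_j)=a(v_j)$. Integrability is a minor point: the mass $a_i(v_j)$ takes finitely many values, since there are finitely many histories, so all expectations are finite. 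Setting $i=k-1$ yields $\E a_{k-1}(v)=a(v)$.

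\emph{Inclusion probability.} The vertex $v=v_k$ can only enter $I$ at step $k$, and conditionally on $\F_{k-1}$ it is selected with probability $q_k=p(a_{k-1}(v))$. By the tower property, $\P(v\in I_n)=\E p(a_{k-1}(v))$.

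All steps are routine. The only delicate point is the independence argument, which hinges on two facts: the neck $m^*(e)$ is precisely the step at which $L$ detects that the rest of $e$ is already in $I$, and a zeroed mass remains zero thereafter.
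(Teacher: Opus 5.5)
Your proof is correct and follows essentially the same argument as the paper: the one-step computation $q_i\cdot 0+(1-q_i)\,a_{i-1}(v_j)/(1-q_i)=a_{i-1}(v_j)$, iterated, gives the martingale identity; conditioning on $\F_{k-1}$ gives the inclusion probability; and independence holds because selecting the neck of an edge whose other earlier vertices are already selected sets the head's mass to zero, which with $p(0)=0$ blocks the head. Your explicit remark that zero masses stay zero (using $q_i<1$) makes precise a step the paper leaves implicit when it says the head ``cannot subsequently be selected.''
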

\begin{proof}
If $L_i(v_j)=0$, then $a_i(v_j)=a_{i-1}(v_j)$. If $L_i(v_j)=1$, then, conditioned on $\F_{i-1}$,
\[
a_i(v_j)=
\begin{cases}
0,&\text{with probability }q_i,\\[1mm]
a_{i-1}(v_j)/(1-q_i),&\text{with probability }1-q_i.
\end{cases}
\]
Hence $\E[a_i(v_j)\mid\F_{i-1}]=a_{i-1}(v_j)$, and iteration gives $\E a_i(v_j)=a(v_j)$. In particular, if $v=v_k$, then $\E a_{k-1}(v)=a(v)$, while conditioning on $\F_{k-1}$ gives
\[
\P(v\in I_n)=\E p(a_{k-1}(v)).
\]
Finally, suppose that some edge $e$ were contained in $I_n$. When its neck $m^*(e)$ is processed, all earlier vertices of $e$ have already been selected, so selecting the neck sets the mass of the head $m(e)$ to zero. Since $p(0)=0$, the head cannot subsequently be selected, a contradiction.
\end{proof}

We shall also use the following inequality in the proof of \Cref{thm:maintriangle}.

\begin{lemma}\label{ineq:e-t}
    For $\alpha \in (0,1)$ and $t\geq 0$, we have
    \[
    e^{-t}(1-e^{-\alpha e^t})\geq (1-e^{-\alpha})\left(1+\dfrac{t}{\log \alpha}\right),
    \]
    and the right hand side is decreasing in $t$.
\end{lemma}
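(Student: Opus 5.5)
The plan is to reduce the inequality to a concavity statement and compare with a chord. Write
\[
f(t)\defeq e^{-t}\bigl(1-e^{-\alpha e^{t}}\bigr),\qquad g(t)\defeq(1-e^{-\alpha})\Bigl(1+\frac{t}{\log\alpha}\Bigr).
\]
Since $\alpha\in(0,1)$, we have $\log\alpha<0$, so $g$ is affine with negative slope. This already gives the monotonicity claim for the right-hand side. Moreover $f(0)=1-e^{-\alpha}=g(0)$, and $g$ vanishes at $t_0\defeq-\log\alpha=\log(1/\alpha)>0$. For $t\geq t_0$ the inequality is immediate, because $g(t)\leq 0<f(t)$ (note $f>0$ since $\alpha e^t>0$). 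It therefore remains to treat $t\in[0,t_0]$.

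On $[0,t_0]$ I would show that $f$ is concave. Then $f$ lies above the chord joining $(0,f(0))$ and $(t_0,f(t_0))$. At the left endpoint this chord agrees with $g$. At the right endpoint $f(t_0)=\alpha(1-e^{-1})>0=g(t_0)$. Both the chord and $g$ are affine, so the chord dominates $g$ on the whole interval, and hence $f\geq g$ there.

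For the concavity, substitute $u=\alpha e^{t}$, which ranges over $[\alpha,1]\subseteq(0,1]$ when $t\in[0,t_0]$. Then
\[
f=\alpha\,\phi(u),\qquad \phi(u)=\frac{1-e^{-u}}{u},\qquad \frac{d}{dt}=u\frac{d}{du},
\]
so $f''(t)=\alpha\bigl(u\phi'(u)+u^2\phi''(u)\bigr)$. Put $\psi(u)=1-e^{-u}$, so that $\phi=\psi/u$. A direct computation gives
\[
u\phi'+u^2\phi''=u\psi''-\psi'+\frac{\psi}{u}=-(1+u)e^{-u}+\frac{1-e^{-u}}{u}.
\]
Multiplying by $ue^{u}>0$, the condition $f''\leq 0$ becomes
\[
e^{u}-1\leq u+u^2\qquad\text{for } u\in(0,1].
\]
This holds because $e^u-1-u=\sum_{k\geq2}u^k/k!\leq u^2\sum_{k\geq2}1/k!=(e-2)u^2\leq u^2$ when $u\leq1$.

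The main obstacle is choosing the right concavity argument. Expanding $u\phi'+u^2\phi''$ as a power series in $u$ is unpleasant, because the series is alternating but its terms are not monotone on $(0,1]$. Working instead with the closed form via $\psi$ reduces everything to the elementary bound $e^u\leq 1+u+u^2$ on $[0,1]$. The split at $t_0$ is essential: concavity is only needed where $u\leq1$, and beyond $t_0$ the sign of $g$ handles the rest.
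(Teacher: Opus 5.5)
Your proof is correct and follows essentially the same route as the paper. You restrict to $0\le t\le -\log\alpha$, show $f$ is concave there because $f''(t)$ has the sign of $e^{s}-1-s-s^2$ with $s=\alpha e^{t}\in(0,1]$, and compare $f$ with the chord from $(0,f(0))$ to $(-\log\alpha,f(-\log\alpha))$. Your substitution $u=\alpha e^t$ is just a tidier way to carry out the paper's ``direct differentiation'', and your proof of $e^u\le 1+u+u^2$ on $(0,1]$ fills in a step the paper leaves implicit.
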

\begin{proof}
    We may assume $t\leq -\log \alpha$, since otherwise the right hand side is nonpositive.
    Set
    \[
    f_\alpha(t)\defeq e^{-t}\left(1-e^{-\alpha e^t}\right),
    \qquad
    s \defeq \alpha e^t.
    \]
    Direct differentiation gives
    \begin{equation}\label{eq:second-derivative}
    f_\alpha''(t) = e^{-t-s}\left(e^s-1-s-s^2\right).
    \end{equation}
    Since $0\leq t\leq -\log \alpha$, we have $0<s\leq1$ and $e^s\leq1+s+s^2$.
    Thus $f_\alpha$ is concave on this interval.
    Since $f_\alpha(0)=1-e^{-\alpha}$ and $f_\alpha(-\log \alpha)\geq 0$, the concavity yields
    \[
    f_\alpha(t)\geq (1+t/\log \alpha)f_\alpha(0)+(-t/\log \alpha)f_\alpha(-\log \alpha)\geq (1+t/\log \alpha)f_\alpha(0)=(1-e^{-\alpha})\left(1+\dfrac{t}{\log \alpha}\right).
    \]
    The final assertion follows from $\log \alpha < 0$.
\end{proof}

To prove \Cref{cor:fractriangle}, we need the following properties of $\ell_r(x)$ and $F_r(x)$.
\begin{lemma}\label{lem:Frx}
For every integer $r\geq1$ and every $x\geq0$, we have $0<\ell_r(x)<1$ and $F_r(x)>0$. Moreover,
\[
 F_r(x)=\left(\frac r{r+1}(r(r+1))^{-1/r}+o_r(1)\right)
 \left(\frac{\log x}{x}\right)^{1/r}\qquad(x\to\infty).
\]
\end{lemma}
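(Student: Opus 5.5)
The plan is a direct computation in the variable $y\defeq x+3\geq 3$, which keeps $\log y\geq\log 3>0$ throughout. Write
\[
\ell\defeq\ell_r(x)=\left(\frac{\log y}{r(r+1)y}\right)^{1/r}.
\]

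\emph{Step 1: $0<\ell<1$.} Positivity is immediate from $\log y>0$. For the upper bound, use that $\log y\leq y/e<y$ for all $y>0$. This gives $\ell^r<1/(r(r+1))\leq 1/2$, hence $\ell<1$.

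\emph{Step 2: $F_r(x)>0$.} Since $\ell\in(0,1)$, the first factor satisfies $1-e^{-\ell}>0$. It remains to show the second factor is positive, which is equivalent to
\[
x\ell^r<-\log\ell=\frac1r\log\frac1{\ell^r}.
\]
\begin{itemize}
\item Substituting, $x\ell^r=\frac{x\log y}{r(r+1)y}<\frac{\log y}{r(r+1)}$.
\item Also $\frac1r\log(1/\ell^r)=\frac1r\bigl(\log y+\log(r(r+1))-\log\log y\bigr)$.
\item So it suffices that $\frac{\log y}{r+1}<\log y+\log(r(r+1))-\log\log y$.
\item This follows from $\log\log y\leq\log y-1$ (i.e.\ $\log u\leq u-1$ with $u=\log y$), together with $\log(r(r+1))\geq\log 2$ and $\frac{r}{r+1}\log y\geq\frac12\log 3>0$. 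The left side minus the right side is then at most $-\frac{r}{r+1}\log y-\log(r(r+1))-1<0$.
\end{itemize}
This is the only step needing care. I expect it to be the main, though mild, obstacle: a uniform inequality valid for all $x\geq0$, not just asymptotically.

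\emph{Step 3: asymptotics as $x\to\infty$.} First, $\ell\to0$, so $1-e^{-\ell}=(1+o(1))\ell$. Next,
\[
\log\ell=\frac1r\bigl(\log\log y-\log y-\log(r(r+1))\bigr)=-(1+o(1))\frac{\log y}{r},
\qquad
x\ell^r=(1+o(1))\frac{\log y}{r(r+1)}.
\]
Hence the second factor is
\[
1-(1+o(1))\frac{\log y/(r(r+1))}{\log y/r}=1-\frac1{r+1}+o(1)=\frac r{r+1}+o(1).
\]
Finally, $\log y/y=(1+o(1))\log x/x$, so
\[
\ell=(1+o(1))(r(r+1))^{-1/r}\left(\frac{\log x}{x}\right)^{1/r}.
\]
Multiplying the two factors gives the stated form, with the $o_r(1)$ terms depending only on $r$.
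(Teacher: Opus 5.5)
Your Steps 1 and 3 are correct and follow the paper's computation: the same substitution $y=x+3$, the same reduction of $F_r(x)>0$ to $x\ell^r<\log(1/\ell)$, and the same asymptotic bookkeeping. The gap is in Step 2, the step you flagged. After your reductions you need
\[
\log\log y<\frac{r}{r+1}\log y+\log\bigl(r(r+1)\bigr)\qquad(y\geq3),
\]
and the bound $\log\log y\leq\log y-1$ does not give this. Substituting it, the left side minus the right side is at most $\frac{1}{r+1}\log y-\log(r(r+1))-1$. It is not $-\frac{r}{r+1}\log y-\log(r(r+1))-1$ as you wrote; that expression would require $\log\log y\leq-1$. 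The correct bound becomes positive once $y$ is large, for example for $r=1$ as soon as $y>4e^2$. So as written, Step 2 proves $F_r(x)>0$ only on a bounded range of $x$, not for all $x\geq0$. The inequality $\log u\leq u-1$ with $u=\log y$ is too weak because its slope $1$ exceeds the available coefficient $\frac{r}{r+1}$.

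The fix is one line. Use $\log u\leq u/e$ with $u=\log y$; this is the same inequality you already used in Step 1. It bounds the left side minus the right side by $-\bigl(\frac{r}{r+1}-\frac1e\bigr)\log y-\log(r(r+1))<0$, since $\frac{r}{r+1}\geq\frac12>\frac1e$.

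The paper argues differently: it uses $\log y\leq\sqrt y\leq y^{r/(r+1)}$ to get $\log(1/\ell)\geq\frac{\log y}{r(r+1)}$ directly, hence $x\ell^r/\log(1/\ell)\leq x/y<1$. Either route closes the gap.
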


\begin{proof}
Put $D=x+3$ and $\alpha=\ell_r(x)$, so that
$\alpha^r=\log D/(r(r+1)D)\in(0,1)$. Since $D\geq3$,
\[
 \log D\leq\sqrt D\leq D^{r/(r+1)},\qquad
 \log(1/\alpha)=\frac1r\log\frac{r(r+1)D}{\log D}
 \geq\frac1r\log\frac{D}{D^{r/(r+1)}}
 =\frac{\log D}{r(r+1)}.
\]
Consequently,
\[
 \frac{x\alpha^r}{\log(1/\alpha)}
 \le \dfrac{x\log D/(r(r+1)D)}{\log D/\left(r(r+1)\right)}
 =\frac xD<1.
\]
Hence 
\[F_r(x) = (1-e^{-\alpha})\left(1+\frac{x\alpha^r}{\log \alpha}\right) =(1-e^{-\alpha})\left(1-\frac{x\alpha^r}{\log (1/\alpha)}\right) > 0. \]
As $x\to\infty$,
\[
\ell_r(x)^r
=
\frac{\log(x+3)}{r(r+1)(x+3)}
=
(1+o_r(1))
\frac{\log x}{r(r+1)x},
\]
and hence
\[
\ell_r(x)
=
(1+o_r(1))(r(r+1))^{-1/r}
\left(\frac{\log x}{x}\right)^{1/r}.
\]
In particular, $\ell_r(x)\to0$, so
\[
1-e^{-\ell_r(x)}
=
(1+o_r(1))\ell_r(x).
\]
Moreover,
\[
x\ell_r(x)^r
=
(1+o_r(1))
\frac{\log x}{r(r+1)},
\]
while
\[
\log\ell_r(x)
=
\frac1r
\Bigl(
\log\log(x+3)-\log(r(r+1))-\log(x+3)
\Bigr)
=
-\frac1r\log x+o_r(\log x).
\]
Therefore
\[
\frac{x\ell_r(x)^r}{\log\ell_r(x)}
=
-\frac1{r+1}+o_r(1).
\]
Substituting these estimates into the definition of $F_r$ gives
\[
F_r(x)
=
\left(
\frac{r}{r+1}(r(r+1))^{-1/r}
+o_r(1)
\right)
\left(\frac{\log x}{x}\right)^{1/r}.\qedhere
\]
\end{proof}

Fix an ordered linear hypergraph, a target $v$, and $e\in\B(v)$. For $x\prec v$, put
\begin{equation}\label{eq:Ne}
 N_e(x):=\{u\in e^-:x\prec u,\ E(x,u)\ne\varnothing\}.
\end{equation}
Thus $N_e(x)$ records the tail vertices that $x$ can potentially affect in the ordered random pick process. If $x\in e^-$, then $N_e(x)=\varnothing$: a witnessing edge would have two vertices in common with $e$. Call $(x,e)$ \emphd{coupled}, relative to the target $v=m(e)$, if either
\begin{equation}\label{eq:coupled}
 |N_e(x)|\geq2,
 \quad\text{or}\quad
 N_e(x)\ne\varnothing\text{ and }x\in f^-
 \text{ for some }f\in\B(v).
\end{equation}
This is a condition on the ordered hypergraph, independent of the random history. The first alternative records that $x$ may simultaneously affect at least two tail vertices of $e$. The second records that $x$ may affect $e$ from outside while $x$ itself lies in the tail of another backward edge of the target $v$. 
%which also produce many triangles as in the proof of the following lemma.

\begin{lemma}\label{lem:c4-separation}
Given a linear Berge-$C_4$-free $(r+1)$-uniform hypergraph $G=(V,E)$ with any total order $\prec$ on $V$, where $r \ge 1$, and fix $v \in V$. Then the following statements hold.
\begin{enumerate}[label=\textnormal{(\arabic*)}]
    \item\label{lem:c4-separation1} For every $x\prec v$, at most one edge $e\in\mathcal B(v)$ has $N_e(x)\ne\0$.
    \item \label{lem:c4-separation2} Every edge $e\in\mathcal B(v)$ is coupled with at most $T_r\defeq r\binom{r+1}{2}$ vertices.
\end{enumerate}
\end{lemma}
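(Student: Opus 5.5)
Both parts come from one observation: every configuration forbidden or counted in the statement produces a Berge cycle. Part \ref{lem:c4-separation1} will contradict Berge-$C_4$-freeness directly. Part \ref{lem:c4-separation2} reduces to counting Berge triangles through a fixed pair of vertices of $e$, and there linearity together with $C_4$-freeness confines the third core vertex to a single edge.

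For \ref{lem:c4-separation1}, suppose $e\neq e'$ lie in $\B(v)$, $u\in N_e(x)$ and $u'\in N_{e'}(x)$. Take witnesses $g\in E(x,u)$ and $g'\in E(x,u')$. I claim
\[
x \xrightarrow{\,g\,} u \xrightarrow{\,e\,} v \xrightarrow{\,e'\,} u' \xrightarrow{\,g'\,} x
\]
is a Berge $C_4$.
\begin{enumerate}[label=\textnormal{(\roman*)}]
\item The vertices are distinct. By linearity $e\cap e'=\{v\}$, and $u,u'\neq v$ since they lie in tails, so $u\neq u'$. Also $x\prec u,u',v$.
\item The edges are distinct. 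The heads of $g$ and $g'$ are $u,u'\prec v$, so neither equals $e$ or $e'$. Moreover $g\neq g'$ because their heads $u\neq u'$ differ.
\end{enumerate}
This contradicts Berge-$C_4$-freeness.

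For \ref{lem:c4-separation2}, note first that a coupled $x$ satisfies $x\notin e$ (if $x\in e^-$ then $N_e(x)=\0$, and $x\prec v$). I will show that every coupled $x$ is the third core vertex of a Berge triangle whose other two core vertices $y,z$ lie in $e$ and whose edge joining $y,z$ is $e$. There are two cases.
\begin{enumerate}[label=\textnormal{(\roman*)}]
\item If $|N_e(x)|\ge 2$, pick $u_1,u_2\in N_e(x)$ with witnesses $g_i\in E(x,u_i)$. The edges $g_1,g_2,e$ are distinct, since their heads are $u_1,u_2,v$, and they form a triangle on $\{x,u_1,u_2\}$.
\item Otherwise $u\in N_e(x)$ with witness $g$, and $x\in f^-$ for some $f\in\B(v)$. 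Here $f\neq e$ because $x\notin e$, so $g,f,e$ form a triangle on $\{x,u,v\}$.
\end{enumerate}
Since $e$ has $\binom{r+1}{2}$ pairs $\{y,z\}$, it suffices to show that each pair serves at most $r$ vertices $x$.

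Fix $\{y,z\}\subseteq e$ and let $X$ be the set of such $x$. For each $x\in X$ choose edges $g_x\ni x,y$ and $h_x\ni x,z$ completing the triangle. None of these edges equals $e$ (as $x\notin e$), and no $g_x$ equals any $h_{x'}$, since such an edge would contain $y,z$ and hence be $e$ by linearity. Consider distinct $x,x'\in X$ with $g_x\ne g_{x'}$ and $h_x\ne h_{x'}$. Then
\[
x \xrightarrow{\,g_x\,} y \xrightarrow{\,g_{x'}\,} x' \xrightarrow{\,h_{x'}\,} z \xrightarrow{\,h_x\,} x
\]
is a Berge $C_4$, which is forbidden. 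Hence every pair in $X$ shares its $g$-edge or its $h$-edge. It cannot share both, since $g_x$ and $h_x$ would then be distinct edges containing both $x$ and $x'$, violating linearity.

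Now suppose $x_1,x_2,x_3\in X$ with $g_{x_1}=g_{x_2}$ and $h_{x_1}=h_{x_3}$. If $g_{x_3}=g_{x_1}$, then $x_1,x_3$ share both edges, which was excluded. So $g_{x_3}\ne g_{x_2}$, and $h_{x_3}=h_{x_1}\ne h_{x_2}$, so the pair $x_2,x_3$ shares neither edge, a contradiction. Therefore either all of $X$ shares one $g$-edge, giving $X\subseteq g\setminus\{y\}$, or all of $X$ shares one $h$-edge, giving $X\subseteq h\setminus\{z\}$. In either case $|X|\le r$, and summing over pairs gives at most $r\binom{r+1}{2}=T_r$ coupled vertices.

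The step I expect to need the most care is this last one. The edge coincidences that block the direct $C_4$ must be excluded using linearity, and the case analysis must be organized so that the $r$-per-pair bound follows.
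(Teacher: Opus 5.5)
Your proof is correct and follows essentially the same route as the paper. Part (1) is the same Berge $C_4$ through $x,u,v,u'$. In part (2), your observation that any two third core vertices for a fixed pair $\{y,z\}$ share their $y$-edge or their $z$-edge, but not both, is the paper's statement that the bipartite intersection graph $H$ has matching number at most one; your three-vertex argument plays the role of ``$H$ is a star,'' and the only difference is that you count the third core vertices inside the center edge minus $y$ (or $z$) rather than counting triangles via distinct intersection points.
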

\begin{proof}
If $u\in N_e(x)$, choose $g\in E(x,u)$. The heads of $g$ and $e$ differ, and linearity implies $x\notin e$.

For the first statement, suppose that $u\in N_e(x)$ and $z\in N_f(x)$ for distinct $e,f\in\B(v)$. Choose $g\in E(x,u)$ and $h\in E(x,z)$. Since $e\cap f=\{v\}$ and $x\notin e\cup f$, the vertices $v,u,x,z$ are distinct. The edges $g,h$ have distinct heads earlier than $v$, so $e,g,h,f$ are distinct. Then
\[
 v,e,u,g,x,h,z,f,v
\]
form a Berge $C_4$, a contradiction.

For the second statement, fix $e\in\mathcal B(v)$. We first show that each vertex $x$ coupled with $e$ gives rise to a Berge triangle containing $e$.
If $|N_e(x)|\geq 2$, choose distinct $u,z\in N_e(x)$ and witnessing edges $g\in E(x,u)$ and $h\in E(x,z)$. Then $e,g,h$ form a Berge triangle with core vertices $u,x,z$.
Otherwise, $N_e(x)\neq\varnothing$ and $x\in f^-$ for some $f\in\mathcal B(v)$. Choose $u\in N_e(x)$ and $g\in E(x,u)$. Then $e,f,g$ form a Berge triangle with core vertices $u,v,x$.
In either case, $x$ is the unique core vertex outside $e$. Consequently, distinct coupled vertices give rise to distinct Berge triangles containing $e$.

It remains to show that $e$ belongs to at most $T_r=r\binom{r+1}{2}$  Berge triangles.
Every Berge triangle containing $e$ has exactly two core vertices in $e$. Fix such a pair $\{s,t\}\subseteq e$. Let $H$ be the bipartite graph whose two vertex classes consist of the edges other than $e$ containing $s$ and $t$, respectively. Two such edges are adjacent in $H$ if they intersect.
By linearity, each edge of $H$ corresponds to a unique Berge triangle containing $e$ with core vertices $s,t$ on $e$.
We claim that $H$ has matching number at most one. 
Otherwise, let $f_1g_1$ and $f_2g_2$ be two disjoint edges of $H$. By linearity, write
\[
\set{x_1}=f_1\cap g_1,
\qquad
\set{x_2}=f_2\cap g_2,
\]
where $x_1,x_2 \notin e$.
Moreover, $x_1\neq x_2$, since otherwise $f_1$ and $f_2$ would share both $s$ and $x_1$. 
Hence
\[
s,f_1,x_1,g_1,t,g_2,x_2,f_2,s
\]
form a Berge $C_4$, a contradiction.
Thus, if $H$ is nonempty, all its edges form a star. Its degree is at most $r$. Indeed, if its center corresponds to an edge $f$ containing $s$, then distinct neighbors meet $f$ at distinct vertices of $f\setminus\{s\}$; otherwise two such neighbors would share two vertices, contradicting linearity. The case where the center contains $t$ is symmetric.
Therefore there are at most $r$ Berge triangles containing $e$ with $s,t$ as their core vertices on $e$. Summing over the $\binom{r+1}{2}$ possible pairs gives at most $r\binom{r+1}{2}=T_r$ Berge triangles containing $e$. Since distinct coupled vertices give rise to distinct such triangles, the second statement follows.
\end{proof}

\section{Proof of \Cref{thm:maintriangle}}\label{sec:triangle}

We first justify the assertion about good orders in triangle-free hypergraphs.
Suppose that a total order of a triangle-free hypergraph violates \eqref{eq:good}.
There are an edge $f$, a vertex $x\notin f$, and distinct $u,z\in f$ with $E(x,u),E(x,z)\ne\varnothing$.
Relabel so that $x\prec u\prec z$, and choose $e_u\in E(x,u)$ and $e_z\in E(x,z)$.
We have $z\notin e_u$ because its head is $u$, and $u\notin e_z$ because its neck is $x$.
Together with $x\notin f$, these exclusions show that the three distinct edges $f,e_u,e_z$ form a triangle, a contradiction.

Now fix a hypergraph $G=(V,E)$ with a good order $\prec$ on $V$ and  $a\in (0,1)^V$.
Set
\[
p\colon [0,\infty)\to [0,1),\quad p(t)=1-e^{-t}.
\]
Apply the ordered random pick process to $G$, $\prec$, $a$ and $p$ and obtain a random independent set $I_n$.
We shall prove that for every $v \in V$,
\[
\P(v\in I_n)\geq (1-e^{-a(v)})\left( 1+\dfrac{\Lambda_a(v)}{\log a(v)} \right).
\]

\if0
To start with, fix $v=v_k$ and consider the following \emphd{modified process} to $G$, $\prec$, $a$, $p$ and $v$:
initialize $\tilde a_0(v)=a(v)$ and $\tilde I_0=\0$, and for step $i=1,2,\ldots,k-1$, put
\[
 \tilde L_i(v_j):=\mathbf{1}_{\{\exists e\in E(v_i,v_j):\ e\setminus\{v_i,v_j\}\subseteq\tilde I_{i-1}\}}
 \qquad(j>i).
\]
\begin{enumerate}
    \item If $\tilde L_i(v)=0$, do the same update rule as in the ordered random pick process: \todo{tbc}
    \item If $\tilde L_i(v)=1$, put $\tilde I_i=\tilde I_{i-1}$. For $j > i$, set $\tilde a_i(v_j)=e^{\tilde a_{i-1}(v_i)}a_{i-1}(v_j)$ for all $v_j$ with $L_i(v_j)=1$, and otherwise $\tilde a_i(v_j)=\tilde a_{i-1}(v_j)$.
\end{enumerate}
\fi
Fix a target $v=v_k$ and run \Cref{alg:modified-process} with $p(t)=1-e^{-t}$.

\begin{algorithm}[!htbp]
\caption{Modified process with target $v=v_k$}
\label{alg:modified-process}
\begingroup
\raggedright

\setlength{\abovedisplayskip}{5pt}
\setlength{\belowdisplayskip}{5pt}
\setlength{\abovedisplayshortskip}{3pt}
\setlength{\belowdisplayshortskip}{4pt}

\noindent\textbf{Input:}
A hypergraph $G=(V,E)$ with an order
$v_1\prec\cdots\prec v_n$, an initial activity vector
$a\in(0,1)^V$, a measurable selection function
$p:[0,\infty)\to[0,1)$ with $p(0)=0$, and a target vertex $v=v_k$.

\noindent\textbf{Output:}
The history $(\widetilde I_i,\widetilde a_i)_{0\le i<k}$
before the target is processed.

\begin{enumerate}[itemsep=.15cm,topsep=5pt]
\item
Set
\[
\widetilde I_0=\varnothing,
\qquad
\widetilde a_0(u)=a(u)\quad (u\in V).
\]

\item
For $i=1,\ldots,k-1$, put
\[
\widetilde q_i:=p(\widetilde a_{i-1}(v_i)),
\]
and, for every $j>i$, define
\[
\widetilde L_i(v_j):=
\mathbf 1_{\{
\exists e\in E(v_i,v_j):
e\setminus\{v_i,v_j\}\subseteq \widetilde I_{i-1}
\}}.
\]
For every $j>i$ with $\widetilde L_i(v_j)=0$, set
\[
\widetilde a_i(v_j)=\widetilde a_{i-1}(v_j).
\]
If $\widetilde L_i(v)=0$, select $v_i$ with probability
$\widetilde q_i$, conditionally on the previous choices.
If $\widetilde L_i(v)=1$, force $v_i$ to be unselected.
If $v_i$ is selected, set
\[
\widetilde I_i=\widetilde I_{i-1}\cup\{v_i\},
\]
and, for every $j>i$ with $\widetilde L_i(v_j)=1$, set
\[
\widetilde a_i(v_j)=0.
\]
If $v_i$ is not selected, set
\[
\widetilde I_i=\widetilde I_{i-1},
\]
and, for every $j>i$ with $\widetilde L_i(v_j)=1$, set
\[
\widetilde a_i(v_j)
=
\frac{\widetilde a_{i-1}(v_j)}
{1-\widetilde q_i}.
\]

\item
Return the history through step $k-1$.
\end{enumerate}

\endgroup
\end{algorithm}

The modified process agrees with the original process except when $\widetilde L_i(v)=1$. In that case, it deterministically leaves $v_i$ unselected and applies the same mass updates as the unselected branch of the original process.
Define
    \[
    X_v \defeq \sum_{i<k}\widetilde a_{i-1}(v_i)\widetilde L_i(v),
    \]
Directly from the modified process, we have
\[
\widetilde a_{k-1}(v)=a(v)\exp\left(\sum_{i<k,\, \widetilde L_i(v)=1} \widetilde a_{i-1}(v_i) \right)=a(v)\exp\left(  \sum_{i<k} \widetilde a_{i-1}(v_i) \widetilde L_i(v) \right)=a(v)e^{X_v}.
\]

We may use the modified process to give a lower bound on $\P(v\in I_n)$.
Let $\tE$ denote the expectation for the modified process, and let $\widetilde \F_i$ be the $\sigma$-algebra generated by its first $i$ choices. 
\begin{claim}\label{clm:PvinIn}
We have
    \[
    \P(v\in I_n)\geq (1-e^{-a(v)})\left(1+\dfrac{\tE[X_v]}{\log a(v)}\right).
    \]
\end{claim}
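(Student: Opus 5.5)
This follows the fixed-target comparison of Martinsson and Steiner: couple the original process with the modified one, compare $\P(v\in I_n)$ with an expectation over the modified history, and finish with \Cref{ineq:e-t} and Jensen.

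\textbf{Step 1 (change of measure).} In the original process, the event that $v$ is never set to mass zero is the event that no step $i<k$ with $L_i(v)=1$ selects $v_i$. On a given history $\omega$ of the first $k-1$ choices on which this holds, the branch choices at the steps with $L_i(v)=1$ are all "unselected". Otherwise the two processes make identical choices with identical conditional probabilities, since their updates agree there.

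Hence the probability of $\omega$ under the original process equals its probability under the modified process times $\prod_{i<k,\,\widetilde L_i(v)=1}(1-\widetilde q_i)$. With $p(t)=1-e^{-t}$, this factor is
\[
\prod_{i<k,\,\widetilde L_i(v)=1} e^{-\widetilde a_{i-1}(v_i)} = e^{-X_v}.
\]
On such histories the masses coincide, so $a_{k-1}(v)=\widetilde a_{k-1}(v)=a(v)e^{X_v}$. On the complementary histories $a_{k-1}(v)=0$ and $p(0)=0$, so they contribute nothing.

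Using \Cref{prop:process}, $\P(v\in I_n)=\E p(a_{k-1}(v))$, and therefore
\[
\P(v\in I_n)=\tE\Bigl[e^{-X_v}\bigl(1-e^{-a(v)e^{X_v}}\bigr)\Bigr].
\]
The point to check carefully is that $X_v$, the $\widetilde L_i$ and the $\widetilde q_i$ are determined by the modified history. I would formalize this by induction on $i$, writing the original law restricted to $\{a_{k-1}(v)\neq 0\}$ as the modified law weighted by the product of the $(1-\widetilde q_i)$ factors.

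\textbf{Step 2 (pointwise bound).} Apply \Cref{ineq:e-t} with $\alpha=a(v)\in(0,1)$ and $t=X_v\ge 0$. This gives, pointwise,
\[
e^{-X_v}\bigl(1-e^{-a(v)e^{X_v}}\bigr)\ \ge\ (1-e^{-a(v)})\Bigl(1+\frac{X_v}{\log a(v)}\Bigr).
\]

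\textbf{Step 3 (take expectations).} The right-hand side of Step 2 is affine in $X_v$, so taking $\tE$ yields exactly
\[
\P(v\in I_n)\ \ge\ (1-e^{-a(v)})\Bigl(1+\frac{\tE[X_v]}{\log a(v)}\Bigr).
\]
This needs $\tE[X_v]<\infty$, which holds because there are finitely many steps and each mass is finite.

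Step 1, the likelihood-ratio identity, is the only real obstacle. Steps 2 and 3 are immediate once it is in place.
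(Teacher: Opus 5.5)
Your proposal is correct and follows essentially the same route as the paper. Pairing each modified history with the original history that makes the same branch choices gives the likelihood factor $e^{-X_v}$; original histories in which some $v_i$ with $L_i(v)=1$ is selected contribute nothing, since $p(0)=0$. Then \Cref{ineq:e-t} is applied pointwise with $\alpha=a(v)$ and $t=X_v$, and expectations are taken. The only cosmetic point is that no Jensen step is needed: the lower bound is affine in $X_v$, so linearity of expectation suffices, exactly as you do in Step 3.
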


\begin{claimproof}
Following the fixed-target comparison in~\cite{Martinsson2025RandomInd}, we compare the two processes up to the time when $v=v_k$ is processed.

Pair each history of the modified process through step $k-1$ with the history of the original process making the same branch choices.
Inductively, the two processes have the same selected set and current masses along each pair of corresponding histories.
If $\widetilde L_i(v)=0$, their transition probabilities and branch updates agree. 
If $\widetilde L_i(v)=1$, the modified process leaves $v_i$ unselected with probability one. The corresponding unselected branch in the original process has conditional probability $e^{-\widetilde a_{i-1}(v_i)}$ and gives the same mass updates; selecting $v_i$ instead would set the target's mass to zero.
Thus, for each history of the modified process through step $k-1$, the corresponding original history has probability equal to its modified probability multiplied by
\[
\prod_{i<k}e^{-\widetilde a_{i-1}(v_i)\widetilde L_i(v)}=e^{-X_v}.
\]
Summing over the histories of the modified process and using the selection probability at step $k$, we obtain
\begin{align*}
    \P(v\in I_n)
    =\tE\!\left[e^{-X_v}p(\widetilde a_{k-1}(v)) \right]
    =\tE\!\left[ e^{-X_v}\bigl(1-e^{-a(v)e^{X_v}}\bigr)
    \right].
\end{align*}
Applying \Cref{ineq:e-t} pointwise and then taking expectation gives
\[
\P(v\in I_n)\geq(1-e^{-a(v)})\left(1+\frac{\tilde E X_v}{\log a(v)} \right).\qedhere
\]    
\end{claimproof}

To complete the proof, it remains to show that
\[
\tE[X_v]=\tE\left[\sum_{i<k}\widetilde a_{i-1}(v_i)\widetilde L_i(v)\right]\leq \Lambda_a(v)=\sum_{e\in \B(v)}
    a(m^*(e)) \prod_{u\in e\setminus\set{m^*(e),v}}
    \left(1-e^{-a(u)}\right).
\]
This is where the hypergraph setting requires a new ingredient. In the graph case, the corresponding quantity is controlled by the mass of a single earlier neighbor. For a hyperedge, however, its earlier vertices must all have been selected before the neck is processed. We therefore estimate the contribution of each backward edge separately.

\begin{claim}\label{clm:zie}
    Fix $1\leq i<k$ and $e\in E(v_i,v)$.
    Put $S=e\setminus\{v_i,v\}$ and
    \[
    Z_i(e) \defeq \widetilde a_{i-1}(v_i)\indicator_{\set{S \subseteq \widetilde I_{i-1}}}.
    \]
    Then 
    \[
    \tE[Z_i(e)]\leq a(v_i)\prod_{u\in S}\left(1-e^{-a(u)}\right).
    \]
\end{claim}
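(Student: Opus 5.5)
We will prove the claim by exhibiting a supermartingale for the modified process that starts at the right-hand side and ends at $Z_i(e)$. For $0\le j\le i-1$ define
\[
Y_j\defeq \widetilde a_j(v_i)\prod_{u\in S,\ u\in V_j}\indicator_{\{u\in\widetilde I_j\}}\prod_{u\in S,\ u\notin V_j}p\bigl(\widetilde a_j(u)\bigr),\qquad p(t)=1-e^{-t}.
\]
Since $v_i=m^*(e)$, every vertex of $S$ precedes $v_i$, so $S\subseteq V_{i-1}$. Hence $Y_{i-1}=Z_i(e)$, while $Y_0=a(v_i)\prod_{u\in S}(1-e^{-a(u)})$. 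It therefore suffices to show $\tE[Y_j\mid\widetilde\F_{j-1}]\le Y_{j-1}$ for every $1\le j\le i-1$.

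Fix a step $j$ and split into cases according to where $v_j$ lies.

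\emph{Case $v_j\in S$.} The factor $p(\widetilde a_{j-1}(v_j))$ is replaced by $\indicator_{\{v_j\in\widetilde I_j\}}$. The conditional probability of selection is $\widetilde q_j=p(\widetilde a_{j-1}(v_j))$, or $0$ if the step is forced. On the unselected branch $Y_j=0$, so the masses updated there are irrelevant. On the selected branch, the masses of other vertices of $e$ can only drop to $0$ or stay unchanged, and $p$ is increasing. Hence $\tE[Y_j\mid\widetilde\F_{j-1}]\le Y_{j-1}$.

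\emph{Case $v_j\notin e$.} Apply the good-order condition \eqref{eq:good} with $f=e$ and $x=v_j$. At most one vertex $u\in e$ with $v_j\prec u$ satisfies $E(v_j,u)\neq\varnothing$, so at most one vertex of $e$ can be affected at step $j$.
\begin{itemize}
\item If this vertex is $v$, then nothing in $Y$ changes.
\item If the step is forced, that is $\widetilde L_j(v)=1$, then $v$ is already the unique affected vertex of $e$. So the vertices of $e\setminus\{v\}$ are unaffected and $Y_j=Y_{j-1}$. This is the key point: the forced steps, where masses get inflated without compensation, never touch $Y$.
\item Otherwise the step is unforced and at most one factor of $Y$ changes, namely the mass $\widetilde a(u)$ of some $u\in S\cup\{v_i\}$. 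This mass follows the martingale update of \Cref{prop:process}. If $u=v_i$, the factor is linear in the mass and is preserved in conditional expectation. If $u\in S$, the factor is $p$ of the mass, and since $p$ is concave, Jensen's inequality gives $\tE[p(\widetilde a_j(u))\mid\widetilde\F_{j-1}]\le p(\widetilde a_{j-1}(u))$.
\end{itemize}
All other factors are $\widetilde\F_{j-1}$-measurable, so in every subcase $\tE[Y_j\mid\widetilde\F_{j-1}]\le Y_{j-1}$.

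\emph{Case $v_j=v_i$.} This cannot occur, since $j\le i-1$.

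Iterating the supermartingale inequality from $j=1$ to $i-1$ and taking expectations yields $\tE[Z_i(e)]=\tE[Y_{i-1}]\le Y_0$, which is the claim.

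The main obstacle is the case analysis at steps $v_j\notin e$. One must check that the good-order condition simultaneously guarantees two things: at most one factor of $Y$ moves at each step, so that linearity and concavity can be applied to a single factor, and forced steps never alter $v_i$ or $S$, which would otherwise break the martingale property. I expect the subtle part to be verifying that ``affected'' in the modified process, defined via $\widetilde L_j$, is always covered by the static condition $E(v_j,u)\neq\varnothing$ used in \eqref{eq:good}. This holds because $\widetilde L_j(u)=1$ requires some $g\in E(v_j,u)$.
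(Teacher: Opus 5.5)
Your proposal is correct and follows essentially the same route as the paper. It uses the same interpolating supermartingale $Y_j$ and the same split into $v_j\in S$ versus $v_j\notin e$. Goodness of the order guarantees that at most one factor moves at each step, so exactness for the $\widetilde a(v_i)$ factor and Jensen's inequality (concavity of $p$) handle the unforced single-factor updates. When $v_j\in S$, you use the same bound on the selected branch: the remaining masses can only stay unchanged or drop to zero.
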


\begin{claimproof}
 For $0\leq j<i$, define
\[
    Y_j\defeq\widetilde a_j(v_i)
    \indicator_{\{S\cap V_j\subseteq\widetilde I_j\}}
    \prod_{u\in S\setminus V_j}\left(1-e^{-\widetilde a_j(u)}\right).
\]
In $Y_j$, processed vertices contribute their actual selection indicators, while each unprocessed vertex $u\in S\setminus V_j$ contributes the current value
\[
p(\widetilde a_j(u))=1-e^{-\widetilde a_j(u)}
\]
of the selection function. Thus $Y_j$ interpolates between the deterministic initial product $Y_0$ and $Z_i(e)=Y_{i-1}$.
We will show that $(Y_j)_{0\le j<i}$ is a supermartingale.

Since $S\subseteq V_{i-1}$, we have
\[
    Y_0=a(v_i)\prod_{u\in S}(1-e^{-a(u)}),
    \qquad
    Y_{i-1}=Z_i(e).
\]

We prove that
\[
\tE
[Y_j\mid\widetilde{\mathcal F}_{j-1}]
\leq Y_{j-1}
\qquad (1\leq j<i).
\]
Condition on $\widetilde{\mathcal F}_{j-1}$, we distinguish two cases. Set $s:=\widetilde a_{j-1}(v_j)$.

\begin{enumerate}

\item Suppose that $v_j\notin S$.
Since $v_j\notin e$ and the order is good, at most one of
\[
\widetilde L_j(v),\quad
\widetilde L_j(v_i),\quad
\widetilde L_j(u),\; u\in S\setminus V_j
\]
is equal to $1$. Moreover, $S\cap V_j=S\cap V_{j-1}$.

\begin{itemize}

\item If $\widetilde L_j(v)=1$, the modified process forces $v_j$ to be unselected.
By goodness, none of the factors in $Y_j$ changes, so $Y_j=Y_{j-1}$.
The same holds if all the above indicators are zero.

\item If $\widetilde L_j(v_i)=1$, the step
is not forced, and only the mass of $v_i$
may change. Thus $Y_j=0$ on the selected
branch and $Y_j=e^sY_{j-1}$ on the
unselected branch. Consequently,
\[
\begin{aligned}
\tE[Y_j\mid\widetilde{\mathcal F}_{j-1}]
=(1-e^{-s})\cdot0+e^{-s}e^sY_{j-1}=Y_{j-1}.
\end{aligned}
\]

\item If $\widetilde L_j(u)=1$ for some $u\in S\setminus V_j$, the step is not forced, and only the factor $1-e^{-\widetilde a_j(u)}$ may change.
By concavity of $p(t)=1-e^{-t}$,
\[
\begin{aligned}
\tE[1-e^{-\widetilde a_j(u)}
\mid\widetilde{\mathcal F}_{j-1}]\leq
1-\exp\left(-\tE[\widetilde a_j(u)\mid\widetilde{\mathcal F}_{j-1}]\right)
=1-e^{-\widetilde a_{j-1}(u)}.
\end{aligned}
\]
Since all other factors are unchanged,
\[
\tE
[Y_j\mid\widetilde{\mathcal F}_{j-1}]
\leq Y_{j-1}.
\]

\end{itemize}

\item Suppose that $v_j\in S$.

\begin{itemize}

\item If $\widetilde L_j(v)=1$, the
modified process forces $v_j$ to be
unselected, so $Y_j=0$.

\item If $\widetilde L_j(v)=0$, write
\[
Y_{j-1}=(1-e^{-s})R,
\]
where
\[
R:=
\widetilde a_{j-1}(v_i)
\indicator_{\{
S\cap V_{j-1}\subseteq\widetilde I_{j-1}
\}}
\prod_{u\in S\setminus V_j}
(1-e^{-\widetilde a_{j-1}(u)}).
\]
On the unselected branch, $Y_j=0$.
On the selected branch,
\[
\widetilde a_j(v_i)
\leq\widetilde a_{j-1}(v_i),
\qquad
\widetilde a_j(u)
\leq\widetilde a_{j-1}(u)
\quad (u\in S\setminus V_j).
\]
Since $p$ is nondecreasing, $Y_j\leq R$.
Therefore,
\[
\begin{aligned}
\tE
[Y_j\mid\widetilde{\mathcal F}_{j-1}]
\leq (1-e^{-s})R+e^{-s}\cdot0=Y_{j-1}.
\end{aligned}
\]

\end{itemize}
\end{enumerate}

In either case,
\[
\tE
[Y_j\mid\widetilde{\mathcal F}_{j-1}]
\leq Y_{j-1}.
\]
Iterating gives
\[
\tE Z_i(e)=\tE Y_{i-1}
\leq Y_0=a(v_i)
\prod_{u\in e\setminus\{v_i,v\}}
(1-e^{-a(u)}). \hfill \qedhere
\]
\end{claimproof}

Finally, by the definition of
$\widetilde L_i(v)$,
\[
\widetilde a_{i-1}(v_i)\widetilde L_i(v)
\leq\sum_{e\in E(v_i,v)}Z_i(e).
\]
Summing over $i<k$ and applying
\Cref{clm:zie}, we obtain
\[
\begin{aligned}
\tE X_v
&\leq
\sum_{i<k}\sum_{e\in E(v_i,v)}
\tE Z_i(e)\\
&\leq
\sum_{e\in\mathcal B(v)}
a(m^*(e))
\prod_{u\in e\setminus\{m^*(e),v\}}
(1-e^{-a(u)})\\
&=\Lambda_a(v).
\end{aligned}
\]
Since $\log a(v)<0$, \Cref{clm:PvinIn} yields
\[
\mathbb P(v\in I_n)
\geq
(1-e^{-a(v)})
\left(
1+\frac{\Lambda_a(v)}{\log a(v)}
\right).
\]
Since the original process is common
to all targets, the bound holds
simultaneously for every vertex.
This completes the proof of
\Cref{thm:maintriangle}.

\section{Proof of \Cref{thm:mainC4}}\label{sec:C4}
Let $G=(V,E)$ be a finite linear Berge-$C_4$-free $(r+1)$-uniform hypergraph with arbitrary total order $v_1\prec\cdots\prec v_n$ and initial activities $a\in (0,1)^V$.
For $\rho\in (0,1)$, set 
\[
p_\rho\colon [0,\infty)\to [0,1),\quad p_\rho(t)\defeq\min\{t,\rho\}.
\]

Recall
\[
    T_r=r\binom{r+1}{2},
    \qquad
    \kappa_\rho=(1-\rho)^{-1-rT_r}.
\]
Fix $k\in [n]$ and set $v=v_k$.

\begin{lemma}\label{lem:c4-moments}
Apply the ordered random pick process to $G$, $\prec$, $a$ and $p_\rho$, and obtain a random independent set $I_n$. Then
\[
    \E a_{k-1}(v)=a(v),
    \qquad
    \P(v\in I_n)=\E p_\rho(a_{k-1}(v)),
    \qquad
    \E a_{k-1}(v)^2\leq a(v)^2\exp\big(\kappa_\rho\Gamma_a(v)\big).
\]
\end{lemma}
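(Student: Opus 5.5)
The first two identities are exactly \Cref{prop:process} applied with the selection function $p_\rho$. All the work is in the second-moment bound, and the plan is to control $a_{k-1}(v)^2$ step by step through the products attached to backward edges, as described in the proof overview.

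\textbf{One-step evolution of $a_i(v)^2$.} At step $i$ with $L_i(v)=1$, conditioned on $\F_{i-1}$, the square $a_{i}(v)^2$ is $0$ with probability $q_i$ and $a_{i-1}(v)^2/(1-q_i)^2$ otherwise. Hence
\[
\E[a_i(v)^2\mid\F_{i-1}]=a_{i-1}(v)^2/(1-q_i)\le a_{i-1}(v)^2\exp\bigl(q_i/(1-\rho)\bigr).
\]
For the square of $v$'s own mass, the cap $q_i\le\rho$ thus costs only the factor $(1-\rho)^{-1}$. Since $q_i\le a_{i-1}(v_i)$, I will compare $a_{k-1}(v)^2$ with
\[
a(v)^2\exp\Bigl(\tfrac1{1-\rho}\sum_{i<k}a_{i-1}(v_i)L_i(v)\Bigr).
\]
The goal is then to bound the exponential moment $\E\exp(\tfrac1{1-\rho}X)$, where $X=\sum_{i<k}a_{i-1}(v_i)L_i(v)$. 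As before, $L_i(v)\le\sum_{e\in E(v_i,v)}\indicator_{\{e\setminus\{v_i,v\}\subseteq I_{i-1}\}}$.

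\textbf{Supermartingale for the exponential.} Rather than estimating $X$ directly, I will build a supermartingale in the spirit of \Cref{clm:zie}:
\[
M_j=a_j(v)^2\exp\Bigl(\kappa_\rho\sum_{e\in\B(v)}P_j(e)\Bigr),
\]
where $P_j(e)$ is the natural interpolation of $\prod_{u\in e^-}a(u)$. Each processed vertex $u\in e^-$ contributes its selection indicator divided by its selection probability, which is unbiased. Unprocessed tail vertices contribute their current masses, and $P_j(e)$ becomes $0$ once the neck has been processed and $e$ has been used up, its contribution having been absorbed into $a_j(v)^2$. Since $P_0(e)=\prod_{u\in e^-}a(u)$, we get $M_0=a(v)^2e^{\kappa_\rho\Gamma_a(v)}$. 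It then suffices to show $\E[M_j\mid\F_{j-1}]\le M_{j-1}$ and note $M_{k-1}\ge a_{k-1}(v)^2$.

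\textbf{Case analysis at step $j$.} Write $x=v_j$.
\begin{enumerate}
\item If $x$ lies in no tail and affects no tail vertex, nothing changes.
\item If $x$ is the neck of some $e\in\B(v)$, the exponential term loses $\kappa_\rho P_{j-1}(e)\ge\kappa_\rho a_{j-1}(x)\indicator$, while $a(v)^2$ gains at most a factor $e^{q/(1-\rho)}$. By linearity, $x$ is the neck of a unique such edge, and $\kappa_\rho\ge(1-\rho)^{-1}$, so this balances by convexity.
\item If $x\in e^-$ is a non-neck tail vertex, the indicator-over-probability factor is a martingale. Here I expect to use $e^{\kappa P}$ convexity together with the bound $1/(1-\rho)$ on a $0/(1/(1-q))$ jump: the jump multiplies $P$ by at most $(1-\rho)^{-1}$ each time.
\item If $x$ affects tail vertices of $e$ from outside, \Cref{lem:c4-separation}\ref{lem:c4-separation1} says only one edge $e$ is involved, and the update of $P(e)$ is mean-preserving. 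When $(x,e)$ is uncoupled, only one factor changes, linearly, so the product stays a martingale.
\end{enumerate}

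\textbf{Main obstacle: coupled steps.} The difficult case is when $(x,e)$ is coupled: several factors of $P(e)$ jump together, or $x$ simultaneously lies in another tail. Then $\E$ of the product is no longer preserved, and it may grow by up to $(1-q)^{-r}\le(1-\rho)^{-r}$. By \Cref{lem:c4-separation}\ref{lem:c4-separation2}, each $e$ meets at most $T_r$ coupled vertices, so the total inflation of $P(e)$ over the whole process is at most $(1-\rho)^{-rT_r}$. I plan to absorb this by running the supermartingale with the weight $\kappa_\rho=(1-\rho)^{-1-rT_r}$ instead of $(1-\rho)^{-1}$, or equivalently by replacing $P_j(e)$ with $P_j(e)(1-\rho)^{-r\cdot\#\{\text{remaining coupled vertices of }e\}}$, which is decreasing in time and makes the coupled steps balance. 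The delicate part is combining convexity of $\exp$ with these jumps, since $e^{\kappa P}$ is not linear in $P$. I would first try the inequality $\E e^{\kappa Y}\le e^{\kappa\E Y}$ failing only by a factor controlled via $e^{\kappa P}\le 1+\kappa P e^{\kappa P}$. If that fails, I would fall back on expanding the exponential over subsets of $\B(v)$ and iterating products $\prod_{e\in S}P(e)$, as the overview suggests. Distinct edges of $\B(v)$ share only $v$ and, by \Cref{lem:c4-separation}, are never jointly affected, so each such product evolves as a product of independent-like martingale factors.
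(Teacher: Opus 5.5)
Your main device, the potential $M_j=a_j(v)^2\exp\bigl(\kappa_\rho\sum_{e\in\B(v)}P_j(e)\bigr)$, is not a supermartingale, and it fails already at uncoupled steps.

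In your case (4) with $(x,e)$ uncoupled, $a_j(v)=a_{j-1}(v)$, and $P_j(e)$ equals $0$ with probability $q$ and $P_{j-1}(e)/(1-q)$ with probability $1-q$. Hence
\[
\E\bigl[e^{\kappa_\rho P_j(e)}\mid\F_{j-1}\bigr]=q+(1-q)e^{\kappa_\rho P_{j-1}(e)/(1-q)}>e^{\kappa_\rho P_{j-1}(e)}
\]
whenever $P_{j-1}(e)>0$ and $0<q<1$. That $P(e)$ is itself a martingale does not help once it sits inside a strictly convex function.

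Case (3) is worse, and your description of it is off. Processing $x\in e^-$ does not produce a $0$ versus $(1-q)^{-1}$ jump; it replaces the factor $a_{j-1}(x)$ by a selection indicator. With the unbiased normalization $\indicator_{\{x\in I_j\}}a_{j-1}(x)/q$, $P_j(e)$ is $P_{j-1}(e)/q$ with probability $q$ and $0$ otherwise. The multiplier $1/q$ is not bounded by $(1-\rho)^{-1}$. Moreover $\E[e^{\kappa_\rho P_j(e)}\mid\F_{j-1}]=1-q+qe^{\kappa_\rho P_{j-1}(e)/q}$ exceeds $e^{\kappa_\rho P_{j-1}(e)}$ by a factor that is not uniformly bounded, because evolving masses are unbounded. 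The inequality $\E e^{\kappa Y}\le e^{\kappa\E Y}$ you plan to try is Jensen in the wrong direction, so there is no small correction to recover.

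Your opening reduction is also not valid as stated. Pathwise one only gets $a_{k-1}(v)^2\le a(v)^2e^{2X/(1-\rho)}$. The single factor $(1-q_i)^{-1}$ per step appears only after averaging out the killing branch, i.e.\ under the modified measure of \Cref{sec:triangle}, not as a bound by $\E e^{X/(1-\rho)}$.

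What is missing is to let each backward edge enter multilinearly. Replace $\exp(\kappa_\rho P_j(e))$ by $1+\kappa_j(e)P_j(e)$, with $\kappa_j(e)=(1-\rho)^{-1-rc_j(e)}$, where $c_j(e)$ counts the vertices among $v_{j+1},\ldots,v_{k-1}$ coupled with $e$. Each step then balances:
\begin{itemize}
\item A mean-preserving jump of a single $P(e)$ costs nothing.
\item At the neck step of $f$, on $\{L_j(v)=1\}$ one has $P_{j-1}(f)\ge a_{j-1}(v_j)\ge q$. So the growth $1/(1-q)\le 1+q/(1-\rho)$ of $a(v)^2$ is absorbed by $1+\kappa_{j-1}(f)P_{j-1}(f)$.
\item At a coupled step, \Cref{lem:c4-separation}\ref{lem:c4-separation1} says only the edge $f$ with $x\in f^-$ (if any) and one edge $e$ affected from outside are involved. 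Once a tail mass of $e$ is actually hit, $P_j(f)P_j(e)$ vanishes on both branches. The jump factor $(1-q)^{-m}\le(1-\rho)^{-r}$ on $P(e)$ is paid for by the drop of $c_j(e)$.
\end{itemize}
Since $c_0(e)\le T_r$ by \Cref{lem:c4-separation}\ref{lem:c4-separation2}, this gives $\E a_{k-1}(v)^2\le a(v)^2\prod_{e\in\B(v)}\bigl(1+\kappa_\rho\prod_{u\in e^-}a(u)\bigr)$. The exponential enters only at the very end, via $1+y\le e^y$.

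Expanding this product over subsets $S\subseteq\B(v)$ is exactly the paper's argument. It uses the quantities $Y_i(S)=a_i(v)^2\prod_{e\in S}Z_i(e)$, the one-step bound of \Cref{claim:YiS} with the branching $S\to S\cup\{f\}$ at the neck of $f$, and the coefficient recursion of \Cref{cl:coefficients}. You name this only as a fallback, but it is the step that carries the proof; the exponential route as proposed does not go through.
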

\begin{proof}
    The first two identities follow from \Cref{prop:process}. We prove the second-moment estimate in three steps.
    \begin{claim}\label{clm:square-step}
    For $1\leq i\leq k-1$,
        \begin{equation*}
        \E[a_i(v)^2\mid\mathcal F_{i-1}]
  \leq a_{i-1}(v)^2
  \left(1+\frac{a_{i-1}(v_i)}{1-\rho}L_i(v)\right).
    \end{equation*}
    %Here if $ E(v_i,v)=\0$, the product is set to be 0, and note that since $G$ is linear, $|E(v_i,v)|\leq 1$.
    \end{claim}
    \begin{claimproof}
Condition on $\F_{i-1}$ and put $q=p_\rho(a_{i-1}(v_i))$ and $z=a_{i-1}(v)$. If $L_i(v)=0$, the target mass is unchanged. If $L_i(v)=1$, it becomes zero with probability $q$ and $z/(1-q)$ with probability $1-q$. Thus
\[
 \E[a_i(v)^2\mid\F_{i-1}]
 =z^2\left(1+\frac q{1-q}L_i(v)\right)
 \leq z^2\left(1+\frac{a_{i-1}(v_i)}{1-\rho}L_i(v)\right),
\]
since $q\leq a_{i-1}(v_i)$ and $1-q\geq1-\rho$.
    \end{claimproof}
   % Upon this iteration, we use the following notion to record the expectation of $a_{k-1}(v)^2$ represented by the information in $\F_i$.
    For $e\in\B(v)$ and $0\leq i<k$, abbreviate
\[
 Z_i(e):=\mathbf 1_{\{e^-\cap V_i\subseteq I_i\}}
 \prod_{u\in e^-\setminus V_i}a_i(u).
\]
For $S\subseteq\B(v)$, define
\[
 Y_i(S):=a_i(v)^2\prod_{e\in S}Z_i(e).
\]
    In particular,
    \[
    Y_{k-1}(\0)=a_{k-1}(v)^2,\qquad Y_0(S)=a(v)^2\prod_{e\in S}\prod_{u\in e^-}a(u).
    \]
    For $1\leq i<k$ and $S\subseteq\B(v)$, define
\[
C_i(S):=
\mathbf 1_{\{\exists e\in S:\ (v_i,e)\text{ is coupled}\}},
\qquad
\beta_i(S):=(1-\rho)^{-rC_i(S)}.
\]
    \begin{claim}\label{claim:YiS}
    For $1\leq i\leq k-1$ and $S\subseteq \B(v)$, we have
        \begin{align*}
       \E[Y_i(S)\mid \F_{i-1}]\leq 
       \begin{cases}
           \beta_i(S)\big(Y_{i-1}(S)+(1-\rho)^{-1}Y_{i-1}(S\cup \set{f})\big), &  E(v_i,v)\setminus S=\{f\};\\
           \beta_i(S)\big(Y_{i-1}(S)\big),& \text{otherwise}.
        \end{cases}
    \end{align*}
    \end{claim}

    \begin{claimproof}
Condition on $\F_{i-1}$ and write
\[
s:=a_{i-1}(v_i),\qquad
q:=p_\rho(s).
\]

First suppose that $v_i$ belongs to no edge of $S$. Put
\[
P_{i-1}:=\prod_{e\in S}Z_{i-1}(e),
\qquad
P_i:=\prod_{e\in S}Z_i(e).
\]
By \Cref{lem:c4-separation}\ref{lem:c4-separation1},
all affected tail masses appearing in these products belong to at most one edge of $S$.
Let $m$ be their number, so $0\leq m\leq r$.

Suppose first that $E(v_i,v)=\varnothing$. Then the target mass is unchanged.
If $m=0$, then $Y_i(S)=Y_{i-1}(S)$. If $m\geq1$, the selected branch gives $P_i=0$, while on the unselected branch
\[
P_i=(1-q)^{-m}P_{i-1}.
\]
Hence
\[
\E[Y_i(S)\mid\F_{i-1}]
=(1-q)^{1-m}Y_{i-1}(S).
\]
If $m=1$, this equals $Y_{i-1}(S)$. If $m\geq2$, then the affected edge is coupled with $v_i$, so $C_i(S)=1$. Since $q\leq\rho$ and
$m\leq r$,
\[
(1-q)^{1-m}\leq(1-\rho)^{-r}=\beta_i(S).
\]
Thus
\[
\E[Y_i(S)\mid\F_{i-1}]
\leq\beta_i(S)Y_{i-1}(S).
\]

Now suppose that $E(v_i,v)=\{f\}$. Since $v_i$ belongs to no edge of $S$, we have $f\notin S$. If $m\geq1$, then the affected edge in $S$ is coupled with $v_i$, since $x\in f^-$. Consequently, whether $m=0$ or $m\geq1$, both branches satisfy
\[
P_i\leq\beta_i(S)P_{i-1}.
\]
Using \Cref{clm:square-step},
\begin{align*}
\E[Y_i(S)\mid\F_{i-1}]
&\leq
\beta_i(S)P_{i-1}
\E[a_i(v)^2\mid\F_{i-1}]\\
&\leq
\beta_i(S)
\left(
Y_{i-1}(S)
+\frac{sL_i(v)}{1-\rho}Y_{i-1}(S)
\right).
\end{align*}
Since $v_i=m^*(f)$,
\[
Z_{i-1}(f)=sL_i(v).
\]
As $f\notin S$,
\[
sL_i(v)Y_{i-1}(S)
=
Y_{i-1}(S\cup\{f\}).
\]
Therefore
\[
\E[Y_i(S)\mid\F_{i-1}]
\leq
\beta_i(S)
\bigl(
Y_{i-1}(S)
+(1-\rho)^{-1}Y_{i-1}(S\cup\{f\})
\bigr).
\]

It remains to consider the case that $v_i\in f^-$ for some $f\in S$.
By linearity, such an edge $f$ is unique. If
\[
f^-\cap V_{i-1}\not\subseteq I_{i-1},
\]
then both $Y_{i-1}(S)$ and $Y_i(S)$ are zero, so we may assume
\[
f^-\cap V_{i-1}\subseteq I_{i-1}.
\]

If $v_i=m^*(f)$, then $Y_i(S)=0$ on both branches. Indeed, if $v_i$ is not selected, then
\[
f^-\cap V_i\not\subseteq I_i,
\]
so $Z_i(f)=0$; if $v_i$ is selected, then $L_i(v)=1$ and hence
$a_i(v)=0$.

Suppose therefore that $v_i\neq m^*(f)$. By linearity,
\[
E(v_i,v)=\varnothing,
\]
since any edge in $E(v_i,v)$ would share the two vertices $v_i$ and $v$ with $f$. Similarly, no other tail mass of $f$ can be affected
by $v_i$.

Write
\[
Y_{i-1}(S)=sR,
\]
where $R$ is the product defining $Y_{i-1}(S)$ with the single factor $s=a_{i-1}(v_i)$ removed. On the unselected branch, $Z_i(f)=0$, so $Y_i(S)=0$. On the selected branch, the factor $s$ is removed, the other masses in $f^-$ are unchanged, and masses belonging to other edges of $S$ can only stay unchanged or become zero. Hence
\[
Y_i(S)\leq R
\]
on the selected branch. Since $q=p_\rho(s)\leq s$,
\[
\E[Y_i(S)\mid\F_{i-1}]
\leq qR
\leq sR
=
Y_{i-1}(S).
\]
Here $E(v_i,v)\setminus S=\varnothing$ and $\beta_i(S)\geq1$, so the second case of the claim follows.
    \end{claimproof}

Using \Cref{claim:YiS}, we can obtain an upper bound on $\E a_{k-1}(v)^2$.
\begin{claim}\label{cl:coefficients}
We have
\[
 \E a_{k-1}(v)^2\leq
 \sum_{S\subseteq\B(v)}\kappa_\rho^{|S|}Y_0(S).
\]
\end{claim}
\begin{claimproof}
For $e\in\B(v)$, let $\nu(e)$ be the index of its neck, so that $m^*(e)=v_{\nu(e)}$. The indices $\nu(e)$ are distinct: two backward edges with the same neck would share both that neck and $v$. For $0\leq i<k$, put
\[
 \B_i:=\{e\in\B(v):\nu(e)>i\}.
\]
Thus $\B_0=\B(v)$ and $\B_{k-1}=\varnothing$. If an edge $f_i\in \B(v)$ has neck $v_i$, then $\B_{i-1}=\B_i\cup\{f_i\}$; otherwise $\B_{i-1}=\B_i$.

We define nonnegative deterministic coefficients $c_{S,i}$, supported on $S\subseteq\B_i$, so that
\begin{equation}\label{eq:conditional-iteration}
 \E[a_{k-1}(v)^2\mid\F_i]
 \leq\sum_{S\subseteq\B_i}c_{S,i}Y_i(S).
\end{equation}
At $i=k-1$, take $c_{\varnothing,k-1}=1$.
Suppose the coefficients at time $i$ are defined. For every $S\subseteq\B_i$, set
\begin{equation}\label{eq:coefficient-recursion}
 \begin{aligned}
 c_{S,i-1}&:=\beta_i(S)c_{S,i},\\
 c_{S\cup\{f_i\},i-1}&:=(1-\rho)^{-1}\beta_i(S)c_{S,i}
       &&\text{if }f_i\text{ exists}.
 \end{aligned}
\end{equation}
The second line is omitted when $f_i$ does not exist. There is no conflict between these assignments: $f_i\notin\B_i$, so the first line defines coefficients of sets not containing $f_i$ and the second defines those containing it. The tower property and \Cref{claim:YiS} prove \eqref{eq:conditional-iteration} at time $i-1$.

Fix a final subset $S\subseteq\B(v)$. In this backward expansion there is exactly one sequence of subset choices leading to $S$: an edge $e$ is inserted at step $\nu(e)$ if and only if $e\in S$. Thus the subset at time $i$ on that sequence is $S\cap\B_i$. Every insertion contributes $(1-\rho)^{-1}$, and every step contributes $\beta_i(S\cap\B_i)$. Therefore
\[
 c_{S,0}=(1-\rho)^{-|S|}
 \prod_{i=1}^{k-1}\beta_i(S\cap\B_i)
 =(1-\rho)^{-|S|-r\sum_{i=1}^{k-1}C_i(S\cap\B_i)}.
\]
For this sequence,
\begin{align*}
 \sum_{i=1}^{k-1}C_i(S\cap\B_i)
 &\leq\sum_{e\in S}
 \bigl|\{i<k:(v_i,e)\text{ is coupled}\}\bigr|\\
 &\leq T_r|S|
\end{align*}
by \Cref{lem:c4-separation}(2).
Notice that the bound is $T_r$ per edge, not $T_r$ for the whole subset.
It follows that
\[
 c_{S,0}\leq(1-\rho)^{-(1+rT_r)|S|}=\kappa_\rho^{|S|}.
\]
Taking $i=0$ in \eqref{eq:conditional-iteration}, where $\F_0$ is trivial, proves the claim.
\end{claimproof}

Finally, \Cref{cl:coefficients} and the expansion of a product over subsets give
\begin{align*}
 \E a_{k-1}(v)^2
 &\leq\sum_{S\subseteq\B(v)}\kappa_\rho^{|S|}Y_0(S)\\
 &=a(v)^2\sum_{S\subseteq\B(v)}
   \prod_{e\in S}\left(\kappa_\rho\prod_{u\in e^-}a(u)\right)\\
 &=a(v)^2\prod_{e\in\B(v)}
   \left(1+\kappa_\rho\prod_{u\in e^-}a(u)\right)\\
 &\leq a(v)^2\exp\!\left(\kappa_\rho\sum_{e\in\B(v)}\prod_{u\in e^-}a(u)\right)
 =a(v)^2e^{\kappa_\rho\Gamma_a(v)}.\qedhere
\end{align*}

\end{proof}
We now return to \Cref{thm:mainC4}, keeping the total order fixed. For $\varepsilon\in(0,1)$, choose
\[
 \rho:=\frac{\varepsilon}{2(T_r+1)},\qquad
 \delta:=\frac{\varepsilon\rho}{2}
 =\frac{\varepsilon^2}{4(T_r+1)},
\]
and define new initial activities
\[
 a'(u):=(1-\varepsilon/2)\min\{a(u),\delta e^{-\Gamma_a(u)}\}.
\]
Run the ordered process afresh with $a'$ and $p_\rho$, write $a'_i$ for its evolving masses, and let $I'_n$ be its output. All probabilities and expectations in the rest of this proof refer to this new run. Since $\min\{s,\rho\}\geq s-s^2/\rho$ for $s\geq0$, \Cref{lem:c4-moments} gives
\begin{align*}
 \P(v\in I'_n)
 &=\E p_\rho(a'_{k-1}(v))\\
 &\geq a'(v)-\frac{\E[a'_{k-1}(v)^2]}\rho\\
 &\geq a'(v)\left(1-\frac{a'(v)e^{\kappa_\rho\Gamma_{a'}(v)}}\rho\right).
\end{align*}
Each tail has exactly $r$ vertices, so $a'(u)\leq(1-\varepsilon/2)a(u)$ implies
\[
 \Gamma_{a'}(v)\leq(1-\varepsilon/2)^r\Gamma_a(v).
\]
Moreover, $1-\varepsilon/2=1-(T_r+1)\rho\leq(1-\rho)^{T_r+1}$ by Bernoulli's inequality. Consequently,
\begin{align*}
 \kappa_\rho\Gamma_{a'}(v)
 &\leq(1-\rho)^{-1-rT_r+r(T_r+1)}\Gamma_a(v)\\
 &=(1-\rho)^{r-1}\Gamma_a(v)\leq\Gamma_a(v).
\end{align*}
It follows that
\[
 \frac{a'(v)e^{\kappa_\rho\Gamma_{a'}(v)}}\rho
 \leq\frac\delta\rho
 e^{-\Gamma_a(v)+\kappa_\rho\Gamma_{a'}(v)}
 \leq\frac\varepsilon2.
\]
Hence
\[
 \P(v\in I'_n)
 \geq(1-\varepsilon/2)^2\min\{a(v),\delta e^{-\Gamma_a(v)}\}
 \geq(1-\varepsilon)\min\{a(v),\delta e^{-\Gamma_a(v)}\}.
\]
The same new run gives these bounds for all vertices, completing the proof of \Cref{thm:mainC4}.

\section{Proofs of corollaries}\label{sec:corollaries}

\begin{proof}[Proof of \Cref{cor:localshearerbound}]
Fix the auxiliary weight $w$, put $d_v=d_w(v)$ and $a(v)=\ell_r(d_v)$, and order $V$ by nondecreasing values of $a(v)/w(v)^{1/r}$. If $u\prec v$, then
\[
 a(u)\leq a(v)\left(\frac{w(u)}{w(v)}\right)^{1/r}.
\]
Since $G$ is triangle-free, this order is good.
For every $e\in\B(v)$,
\begin{align*}
 a(m^*(e))\prod_{u\in e\setminus\{m^*(e),v\}}(1-e^{-a(u)})
 &\leq\prod_{u\in e^-}a(u)\\
 &\leq a(v)^r
 \left(\prod_{u\in e\setminus\{v\}}\frac{w(u)}{w(v)}\right)^{1/r}.
\end{align*}
Summing and enlarging from $\B(v)$ to all edges containing $v$ gives
$\Lambda_a(v)\leq a(v)^r d_v$. 
Since $\log a(v)<0$, 
\Cref{thm:maintriangle} yields
\[
 \P(v\in I)\geq(1-e^{-a(v)})
 \left(1+\frac{a(v)^r d_v}{\log a(v)}\right)
 =F_r(d_v).
\]
Positivity and the asymptotic formula follow from \Cref{lem:Frx}.
\end{proof}

\begin{proof}[Proof of \Cref{cor:fractriangle}]
    
Choose an order with $|\B(v)|\leq d$. Put $\alpha=\ell_r(d)$ and define the activity vector by $a(v)=\alpha$ for all $v\in V$. This order is good and
\[
 \Lambda_a(v)\leq d\alpha(1-e^{-\alpha})^{r-1}\leq d\alpha^r.
\]
\Cref{thm:maintriangle} therefore gives a common marginal lower bound $F_r(d)$. By \Cref{lem:Frx}, 
\[
 \chi_f(G)\leq F_r(d)^{-1}
 =\left(\frac{r+1}{r}(r(r+1))^{1/r}+o_r(1)\right)
 \left(\frac d{\log d}\right)^{1/r}.\qedhere
\]
\end{proof}

\begin{proof}[Proof of \Cref{thm:c4-weighted-local}]
Fix $\varepsilon\in(0,1)$, set $\theta=\varepsilon/2$, and let $\delta_0=\delta(r,\theta)$ be supplied by \Cref{thm:mainC4}. For $x>1$, put
\[
 \beta(x):=\left(\frac{(1-\theta)\log x}{rx}\right)^{1/r}.
\]
Choose $D=D(r,\varepsilon)>1$ so large that, for all $x\geq D$,
\begin{equation}\label{eq:local-cutoff}
 \beta(x)<1,\qquad
 \beta(x)x^{(1-\theta)/r}
 =\left(\frac{1-\theta}{r}\log x\right)^{1/r}x^{-\theta/r}
 \leq\delta_0.
\end{equation}
Such a choice is possible because the last expression tends to zero.

Write $d_v=d_w(v)$, computed in the original hypergraph, and define an activity vector on all of $V$ by
\[
 a(v):=
 \begin{cases}
 \beta(d_v),&d_v\geq D,\\
 1/2,&d_v<D.
 \end{cases}
\]
Order the vertices by nondecreasing $a(v)/w(v)^{1/r}$. For $u\prec v$, this order gives
\[
 a(u)\leq a(v)\left(\frac{w(u)}{w(v)}\right)^{1/r}.
\]
Multiplying over a backward edge and summing therefore yields
\[
 \Gamma_a(v)
 \leq a(v)^r\sum_{e\in\B(v)}
 \left(\prod_{u\in e^-}\frac{w(u)}{w(v)}\right)^{1/r}
 \leq a(v)^r d_v.
\]
If $d_v\geq D$, this implies
\[
 a(v)e^{\Gamma_a(v)}
 \leq \beta(d_v)d_v^{(1-\theta)/r}\leq\delta_0.
\]
Thus the minimum in \Cref{thm:mainC4}, applied with error parameter $\theta$, equals $a(v)$ at every such vertex. One random independent set therefore satisfies
\begin{align*}
 \P(v\in I)
 &\geq(1-\theta)a(v)\\
 &=(1-\theta)^{1+1/r}
 \left(\frac{\log d_v}{rd_v}\right)^{1/r}
 \geq(1-\varepsilon)
 \left(\frac{\log d_v}{rd_v}\right)^{1/r}.
\end{align*}
The last inequality uses $1+1/r\leq2$ and $(1-\theta)^2\geq1-2\theta$. The value $1/2$ below the cutoff ensures that $a \in (0, 1)^V$; no marginal requirement is imposed at those vertices. All local degrees above are those of the original hypergraph.
\end{proof}

\begin{proof}[Proof of \Cref{cor:c4-fractional}]
Fix $\theta\in(0,1)$ and let $\delta_0=\delta(r,\theta)$. If $V=\varnothing$, the assertion is immediate. Otherwise choose a degeneracy order and set $a(v)=\alpha$ for every vertex, where
\[
 \alpha:=\left(\frac{(1-\theta)\log d}{rd}\right)^{1/r}.
\]
For sufficiently large $d$, $\alpha\in(0,1)$ and
\[
 \alpha\exp(d\alpha^r)
 =\left(\frac{1-\theta}{r}\log d\right)^{1/r}d^{-\theta/r}
 \leq\delta_0.
\]
Since $\Gamma_a(v)\leq d\alpha^r$ for every vertex, \Cref{thm:mainC4} gives the common marginal bound $(1-\theta)\alpha$. Consequently,
\[
 \chi_f(G)\leq\frac1{(1-\theta)\alpha}
 =(1-\theta)^{-1-1/r}
 \left(\frac{rd}{\log d}\right)^{1/r}.
\]
For every fixed $\theta>0$ this holds once $d$ is sufficiently large. Letting $\theta$ tend to zero gives \begin{equation*}
    \chi_f(G)
    \leq
    \bigl(1+o_r(1)\bigr)
    \left(\frac{rd}{\log d}\right)^{1/r}
    \qquad(d\to\infty).\qedhere
\end{equation*}
\end{proof}

\section*{AI declaration}

The authors used ChatGPT with GPT-5.6 Sol and GPT-6 Astra in the development of \Cref{thm:mainC4}. Specifically, the AI tools assisted with the choice of the selection function $p_\rho$ and the correct iteration family $\{Y_i(S)\}$ under the recurrence relation in \Cref{clm:square-step}. Other mathematical arguments are due to the authors. ChatGPT was also used to assist with the language, organization, and presentation of the manuscript. The authors take full responsibility for the contents.

\printbibliography

\end{document}